\newcommand\blfootnote[1]{%
  \begingroup
  \renewcommand\thefootnote{}\footnote{#1}%
  \addtocounter{footnote}{-1}%
  \endgroup
}
\numberwithin{equation}{section}
\newtheorem{theorem}{Theorem}[section]
\newtheorem{lemma}[theorem]{Lemma}
\newtheorem{corollary}[theorem]{Corollary}
\newtheorem{proposition}[theorem]{Proposition}
\newtheorem{definition}[theorem]{Definition}
\newtheorem{example}[theorem]{Example}
\newtheorem{remark}[theorem]{Remark}
\newtheorem{state}[theorem]{Assumption}
\title{\Large\textbf{On the solvability of a two-dimensional Ventcel problem with variable coefficients}}
\author{\normalsize Antonio Greco$^*$ and Giuseppe Viglialoro\\ \\
{\normalsize\centering{Dipartimento di Matematica e Informatica}}\\ {\normalsize\centering{Universit\`a di Cagliari. Via Ospedale 72.
09124, Cagliari (Italy)}}}
\date{}
\begin{document}

\belowdisplayshortskip = \belowdisplayskip
% Needed by hyperref:
\paperwidth=198 true mm
\paperheight=297 true mm
\pdfpagewidth=198 true mm
\pdfpageheight=297 true mm

\maketitle

\pdfbookmark[1]{Abstract}{Abstract}
\begin{abstract}
This paper deals with the following mixed boundary value problem 
\begin{equation}\label{ProblemAbstract}
\tag{$\Diamond$}
\begin{cases}
-\Delta u = f &\mbox{in $\Omega$,}
\\
u = \varphi &\mbox{on $\Gamma_{\! D}$,}
\\
u_\nu - a_2 \, \Delta_{\tau \,} u + a_0 \, u = g
&\mbox{on $\Gamma_{\! \nu}$,}
\end{cases}
\end{equation}
where $\Omega$ is some bounded domain of $\mathbb{R}^2$ with $\partial \Omega=\Gamma_{\!D}\cup \Gamma_{\! \nu}$, $\nu$ indicating the normal unit vector to $\Gamma_{\! \nu}$ and $\Delta_\tau$ the Laplace--Beltrami operator along~$\Gamma_{\! \nu}$. Additionally, $f(\bf x)$, $\varphi(\bf x)$, $a_2(\bf x)$, $a_0(\bf x)$ and $g(\bf x)$ are convenient functions defined on $\Omega$, $\Gamma_{\!D}$ and $\Gamma_{\! \nu}$, and ${\bf x} = (x,y)$ denotes a two-dimensional array. Under suitable assumptions on the data, we first give the definition of a weak solution $u$ to the problem and then we prove that it is uniquely solvable. Further, in the spirit of \cite{ViglialoroGonzalezMurciaIJCM}, we consider a particular case of \eqref{ProblemAbstract} arising in real-world applications: we discuss the resulting model and provide an explicit solution. \\
{{\textit{Key-words:} Ventcel boundary condition, Laplace--Beltrami
    operator, composite Sobolev spaces, well-posedness.}} \\
{{\textit{2010 Mathematics Subject Classification:} 35J25, 35M12,
    35A01, 35A02.}}
\end{abstract}

% 35J25 - Partial differential equations - Elliptic equations and systems - Boundary value problems for second-order elliptic equations

% 35M12 - Partial differential equations - Equations and systems of special type (mixed, composite, etc.) - Boundary value problems for equations of mixed type

% 35A01 - Partial differential equations - General topics - Existence problems: global existence, local existence, non-existence

% 35A02 - Partial differential equations - General topics - Uniqueness problems: global uniqueness, local uniqueness, non-uniqueness

{\blfootnote{{$^*$Corresponding author: greco@unica.it}}}
\section{Introduction and motivations}\label{SectionIntro}
An increasing interest is currently being devoted to the Ventcel
problem, which originates in the pioneering papers by Ventcel
\cite{V1956,V1959} and Feller \cite{F1952,F1957}, and has been
recently investigated by several authors (see, for instance,
\cite{BNDHV,CLNV,KCDQ,Nicaise-Li-Mazzucato} and the references
therein).

The Ventcel problem is a boundary-value problem of mixed type whose
peculiarity relies in the fact that one of the boundary conditions
involves a second-order differentiation of the solution along a
portion of the boundary; more specifically, the order of the intrinsic
derivative along this part equals the one of the interior differential
operator. This non-standard relation arises when modeling physical
phenomena: from heat conduction processes with a heat source on the
boundary to fluid-structure interaction problems (the reader can find
a deep overview on these topics in \cite[Section 1]{KCDQ}, and in
other references therein listed). Furthermore, as far as our precise
interest is concerned, the mentioned singular boundary condition also
appears when looking for the equilibrium, in planar elasticity, of a
prestressed membrane whose boundary is composed of rigid and cable
elements. In particular, the membrane-cable interface equilibrium
results in a Ventcel-type boundary condition; this is
physically and mathematically discussed in
\cite{ViglialoroGonzalezMurciaIJCM}. More exactly, the mixed boundary
value problem modeling the equilibrium of the membrane is addressed in
\cite{ViglialoroGonzalezMurciaIJCM} by means of a numerical strategy,
leaving the theoretical questions concerning the existence and
uniqueness of solutions open. This challenge was the initial motivation of our
investigation, whose aim is proving such well-posedness (so giving a
more complete picture on the issue) and also considering more general
cases than that in \cite{ViglialoroGonzalezMurciaIJCM} and not
included in the known literature. In order to clarify their
complementary roles, Appendix $\S$\ref{AppendixSection} at the end of
the paper includes some details correlating this present article and
\cite{ViglialoroGonzalezMurciaIJCM} as well as an example of an
explicit resolution.

\subsection{Formulation of the problem: some known results}
Following the notation in~\cite{BNDHV} and
\cite{Nicaise-Li-Mazzucato}, for some bounded domain $\Omega \subset
\mathbb{R}^{d}$, $d\geq 2$, with Lipschitz boundary $\partial \Omega$,
let $\Gamma_{\! \nu}$ be an open subset of $\partial \Omega$ having
positive measure and such that $\Gamma_{\! D}=\partial \Omega
\setminus \Gamma_{\! \nu}$ also has positive measure. For a convenient
function $f=f({\bf x})$, ${\bf x}\in \Omega$, and boundary data $\varphi=\varphi({\bf x})$ and $g=g({\bf x})$, ${\bf x} \in \Gamma_\nu$, as well as for variable coefficients $a_2=a_2({\bf x})$ and $a_0=a_0({\bf x})$, with ${\bf x}\in \Gamma_\nu$, we are interested in the Ventcel mixed boundary value problem
	\begin{equation}\label{inhomogeneous}
	\begin{cases}
	-\Delta u = f &\mbox{in $\Omega$;}
	\\
	u = \varphi &\mbox{on $\Gamma_{\! D}$;}
	\\
	u_\nu - a_2 \, \Delta_{\tau \,} u + a_0 \, u = g
	&\mbox{on $\Gamma_{\! \nu}$,}
	\end{cases}
	\end{equation}
	where $\Delta_{\tau \,}$ is the Laplace--Beltrami operator on $\Gamma_{\! \nu}$ and the subscript  $(\cdot)_\nu$ indicates the outward normal derivative on $\Gamma_{\! \nu}$.
	
In order to frame our contribution in terms of what is already available in the literature, we mention some known results concerning problem \eqref{inhomogeneous}:
\begin{itemize}
\item [-] For $\Gamma_{\!D}=\emptyset$, $\Omega \subset \mathbb{R}^{d}$,
  with $d\geq 2$, $a_0=\alpha>0$ (constant), $a_2=-\beta$ (constant)
  and $g\equiv 0$, well-posedness has been established independently of the sign of $\beta$: see \cite{BNDHV} (the notation is taken from there).
\item [-] For $\Gamma_{\!D}=\emptyset$, $\Omega \subset \mathbb{R}^{d}$,
  with $d\geq 2$, $a_0=\alpha>0$ (constant) and $a_2=\beta>0$
  (constant), regularity results and finite element analysis are
  presented in \cite{KCDQ}.
\item [-] For $\Omega \subset \mathbb{R}^{d}$, with $d=3$, $a_0=0$,
  $a_2=1$ (constant) and $\varphi\equiv 0$, regularity results and a
  priori error analysis in polyhedral domains are obtained in \cite{Nicaise-Li-Mazzucato}.
\end{itemize}
\subsection{Presentation of the main result: novelties and difficulties}
In accordance with the previous premises, we intend to enhance the mathematical theory tied to problem \eqref{inhomogeneous} by extending some of the results  summarized in the above items to the case where the coefficients associated to the Laplace--Beltrami operator effectively depend on the spatial variable. Our  bibliographic research did not show any result in this direction so that, since taking $a_2$ explicitly varying with ${\bf x}\in \Omega$ leads to certain technical complexities, we understand that the corresponding analysis represents a novelty in the field deserving to be studied.  Such an analysis deals with existence and uniqueness of a weak solution to problem~(\ref{inhomogeneous}) under suitable assumptions on the given functions $f,g,\varphi$ and the spatial-dependent coefficients $a_2$ and $a_0$. In particular, to formulate the  main claim of our work we first have to set the following precise assumption:
\begin{state}\label{MainAssumption}
~% (this triggers the line-break)
\begin{enumerate}
\renewcommand\labelenumi{\rm(\arabic{enumi})}
\item $\Omega$ is a bounded Lipschitz domain (an open, connected set) in the plane, whose boundary $\partial \Omega$ properly contains a finite number $N \ge 1$ of simple $C^1$-curves $\Gamma_i$, $i = 1, \dots, N$, satisfying $\Gamma_i \cap \Gamma_j = \emptyset$ for $i \ne j$.
\item\label{parametric}  By a $C^1$-curve~$\Gamma_i$ we mean the trace of a function $r = r_i(s)$ belonging to the class $C^1([0,L_i], \, \mathbb R^2)$ and satisfying $|r'_i(s)| = 1$ in the interval $[0,L_i]$. Thus, $L_i > 0$ is the length of~$\Gamma_i$, and the tangent unit vector $\tau = \partial/\partial s$, as well as the outward unit vector~$\nu$ extend by continuity from the interior of\/~$\Gamma_i$ to its endpoints.
\item\label{simple} Each curve $\Gamma_i$ is simple in the sense that the equality $r(s_1) = r(s_2)$ may hold for $s_1 < s_2$ only if $s_1 = 0$ and $s_2 = L$. We allow the case when $r_i(0) = r_i(L_i)$, i.e., $\Gamma_i$ is a closed curve, but in such a case we require $r'_i(0)= r'_i(L_i)$: thus, $\tau$ and~$\nu$ have a unique extension to the (coinciding) endpoints.
\item The notation $\Gamma_{\! i}^\circ$ stands for the interior of\/~$\Gamma_i$, which is defined as follows: $\Gamma_{\! i}^\circ = \Gamma_{\! i} \setminus \{\, r_i(0), \, r_i(L_i) \,\}$ if $r_i(0) \ne r_i(L_i)$, and\/ $\Gamma_{\! i}^\circ = \Gamma_{\! i}$ if $r_i(0) = r_i(L_i)$. We also define the open subset\/ $\Gamma_{\! \nu} = \bigcup_{i = 1}^N \Gamma_{\! i}^\circ \subset \partial \Omega$, and we assume that the closed subset\/ $\Gamma_{\! D} = \partial \Omega \setminus \Gamma_{\! \nu}$ has a positive length.
\end{enumerate}
\end{state}
\begin{example}\label{ring-example}
A domain fulfilling the above structure is, for instance, the annulus $\Omega = B_{R_1}(0) \setminus \overline B_{R_0}(0)$, with $0 < R_0 < R_1$. We may let\/ $\Gamma_{\! \nu} = \partial B_{R_0}(0)$ and\/ $\Gamma_{\! D} = \partial B_{R_1}(0)$, as well as $\Gamma_{\! \nu} = \partial B_{R_1}(0)$ and\/ $\Gamma_{\! D} = \partial B_{R_0}(0)$: the last  case is considered in~\cite[Section~2.1.2]{BNDHV}. Another example is the square\/ $\Omega = (0,1) \times (1,2)$, $\Gamma_i = [0,1] \times \{\, i \,\}$, $i = 1,2$ and\/ $\Gamma_{\! D} = \{\, 0,1 \,\} \times [1,2]$.
\end{example}

\goodbreak In view of the above, our result is hence summarized in this
\begin{theorem}\label{Maintheorem}
Let Assumption \ref{MainAssumption} be complied and $\varphi \colon \Gamma_{\! D} \to \mathbb R$ satisfy a uniform Lipschitz condition. Then for any $f \in L^2(\Omega)$, $g \in L^2(\Gamma_{\! \nu})$, $0<a_2 \in W^{1,\infty}(\Gamma_{\! \nu})$ and $0\leq a_0 \in L^\infty(\Gamma_{\! \nu})$, problem \eqref{inhomogeneous} admits a unique weak solution.
\end{theorem}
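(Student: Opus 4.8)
The plan is to cast problem \eqref{inhomogeneous} in variational form and to solve it by the Lax--Milgram theorem on a suitable composite Hilbert space, after a preliminary lifting of the Dirichlet datum. First I would remove $\varphi$: since it is uniformly Lipschitz on $\Gamma_{\!D}$, a McShane-type extension produces $\Phi \in W^{1,\infty}(\Omega)$ with $\Phi = \varphi$ on $\Gamma_{\!D}$ and tangential trace $\Phi|_{\Gamma_{\!\nu}} \in W^{1,\infty}(\Gamma_{\!\nu})$. Writing $u = w + \Phi$, the new unknown $w$ vanishes on $\Gamma_{\!D}$. Crucially, I would \emph{not} pass $\Delta\Phi$ to the right-hand side of the strong equation (it is merely a distribution), but carry $\Phi$ through the variational identity, where only $\nabla\Phi \in L^\infty(\Omega)$ and $\nabla_\tau\Phi \in L^\infty(\Gamma_{\!\nu})$ are needed.

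Next I introduce the composite space $V = \{\, v \in H^1(\Omega) : v = 0 \text{ on } \Gamma_{\!D},\ v|_{\Gamma_{\!\nu}} \in H^1(\Gamma_{\!\nu}) \,\}$, normed by $\|v\|_V^2 = \|\nabla v\|_{L^2(\Omega)}^2 + \|\nabla_\tau v\|_{L^2(\Gamma_{\!\nu})}^2$; since $\Gamma_{\!D}$ has positive length, the Poincaré inequality makes this a norm equivalent to the graph norm, and $V$ is a Hilbert space. Testing $-\Delta u = f$ against $v \in V$, integrating by parts in $\Omega$, and using the Ventcel condition to eliminate $u_\nu$ on $\Gamma_{\!\nu}$, the term $-\int_{\Gamma_{\!\nu}} a_2(\Delta_\tau u)v$ is then integrated by parts along each curve $\Gamma_i$. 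Here the structural Assumption~\ref{MainAssumption} is essential: the endpoint contributions vanish because closed curves are covered by the matching-tangent requirement $r_i'(0) = r_i'(L_i)$, while on open arcs the endpoints lie in $\Gamma_{\!D}$, where $v = 0$. This produces the bilinear form
\[
B(u,v) = \int_\Omega \nabla u \cdot \nabla v + \int_{\Gamma_{\!\nu}} a_2\, \nabla_\tau u \cdot \nabla_\tau v + \int_{\Gamma_{\!\nu}} (\nabla_\tau a_2 \cdot \nabla_\tau u)\, v + \int_{\Gamma_{\!\nu}} a_0\, u v,
\]
together with a bounded linear functional $F(v) = \int_\Omega fv + \int_{\Gamma_{\!\nu}} gv - B(\Phi,v)$. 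Continuity of $B$ and $F$ is routine via $a_2 \in W^{1,\infty}$, $a_0 \in L^\infty$, Cauchy--Schwarz, and the trace inequality.

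The main obstacle is coercivity, and it is caused precisely by the spatial dependence of $a_2$: the third term of $B$ is \emph{not} symmetric and is the genuinely new difficulty relative to the constant-coefficient literature. Testing with $v = u$ and completing the square in the tangential variable, I would exploit the identity
\[
\int_{\Gamma_{\!\nu}} a_2 |\nabla_\tau u|^2 + \int_{\Gamma_{\!\nu}} (\nabla_\tau a_2 \cdot \nabla_\tau u)\, u = \int_{\Gamma_{\!\nu}} a_2\Bigl|\nabla_\tau u + \tfrac{\nabla_\tau a_2}{2a_2}\,u\Bigr|^2 - \tfrac14 \int_{\Gamma_{\!\nu}} \tfrac{|\nabla_\tau a_2|^2}{a_2}\,u^2,
\]
which isolates a nonnegative square plus a single negative zero-order remainder on $\Gamma_{\!\nu}$ (this step uses $\inf_{\Gamma_{\!\nu}} a_2 > 0$, guaranteed by $0 < a_2 \in W^{1,\infty}(\Gamma_{\!\nu})$). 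The remainder is absorbed through the trace inequality $\|u\|_{L^2(\Gamma_{\!\nu})}^2 \le C_T \|\nabla u\|_{L^2(\Omega)}^2$, again available since $u = 0$ on $\Gamma_{\!D}$, yielding $B(u,u) \ge c\|u\|_V^2$ and hence Lax--Milgram solvability for $w$, whence for $u$.

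Should the direct absorption prove too lossy (the gain degrades as the oscillation of $a_2$ grows against $\inf a_2$ and $C_T$), the robust route is to keep the estimate as a Gårding inequality $B(u,u) \ge c\|u\|_V^2 - C\|u\|_{L^2(\Gamma_{\!\nu})}^2$ — the embedding $V \hookrightarrow L^2(\Gamma_{\!\nu})$ being compact by Rellich on the curves — and to invoke the Fredholm alternative, reducing existence to uniqueness of the homogeneous problem. The latter I would obtain from a maximum-principle argument using $a_2 > 0$ and $a_0 \ge 0$: at a boundary extremum one has $\nabla_\tau u = 0$, a favourably signed $-a_2\Delta_\tau u$, and $u_\nu \ne 0$ by Hopf, contradicting the Ventcel condition. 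I expect this coercivity/uniqueness step, together with the careful bookkeeping of the endpoint terms in the tangential integration by parts, to be where essentially all the work lies.
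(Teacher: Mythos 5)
Your overall architecture coincides with the paper's: a McShane lifting of the Lipschitz datum, the same non-symmetric variational identity (including the term $\int_{\Gamma_{\!\nu}}(\nabla_{\!\tau\,}a_2\,\nabla_{\!\tau\,}u)\,v\,ds$ and the same endpoint bookkeeping on closed versus open arcs), and, as fallback, a G\aa rding inequality plus the Fredholm alternative with compactness coming from the trace embedding into $L^2(\Gamma_{\!\nu})$. One caveat on your primary route: the ``direct absorption'' cannot yield $B(u,u)\ge c\|u\|_V^2$ in general. After completing the square (and sacrificing half of it so as to retain control of $\|\nabla_{\!\tau\,}u\|_{L^2(\Gamma_{\!\nu})}$), the negative remainder is of size $\big(M^2/(2\lambda_2)-\lambda_0\big)\|u\|^2_{L^2(\Gamma_{\!\nu})}$, with $M=\sup_{\Gamma_{\!\nu}}|\nabla_{\!\tau\,}a_2|$, $\lambda_2=\inf_{\Gamma_{\!\nu}}a_2$, $\lambda_0=\inf_{\Gamma_{\!\nu}}a_0$; absorbing it into $\|Du\|^2_{L^2(\Omega)}$ through the trace inequality $\|u\|_{L^2(\Gamma_{\!\nu})}\le L\,\|Du\|_{L^2(\Omega)}$ requires a smallness condition of the form $\big(M^2/(2\lambda_2)-\lambda_0\big)L^2<1$, which the hypotheses do not provide. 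You half-acknowledge this; the paper simply never attempts it, and instead adds $\sigma_0\int_{\Gamma_{\!\nu}}uv\,ds$, $\sigma_0=M^2/(2\lambda_2)-\lambda_0$, to both sides and goes straight to Lax--Milgram (if $\sigma_0\le0$) or Fredholm (if $\sigma_0>0$).

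The genuine gap is the uniqueness step, which the Fredholm alternative makes unavoidable: you must show that the \emph{weak} homogeneous problem has only the trivial solution in $V_0$, and your maximum-principle/Hopf sketch does not do this. A weak solution is merely an $H^1(\Omega)$ function whose trace lies in $H^1_0(\Gamma_{\!\nu})$; no regularity theory is available (or proved by you) that would let you evaluate $\Delta_{\tau\,}u$ and $u_\nu$ pointwise at a boundary extremum --- regularity for Ventcel problems is a separate and nontrivial matter, treated in dedicated papers cited in the introduction. Moreover, Hopf's boundary-point lemma requires an interior-ball (or at least Dini-type) condition, which a Lipschitz domain whose $\Gamma_i$ are merely $C^1$ need not satisfy, so even for classical solutions your contradiction at the extremum is not justified under the stated assumptions. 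This is exactly where the paper's real work lies: uniqueness is proved entirely at the weak level by a De~Giorgi/Gilbarg--Trudinger-type truncation $w=(v-k)^+$, with $k$ chosen between $\max\{\mu_0,0\}$ and the largest curve-wise maximum $\mu_1$, combined with a one-dimensional Poincar\'e-type inequality on each curve, $\|w\|_{L^2(\tilde\Gamma)}\le|\tilde\Gamma|\,\|\nabla_{\!\tau\,}w\|_{L^2(\tilde\Gamma)}$ where $\tilde\Gamma=\tilde\Gamma(k)$ is the set $\{k<v<\max v\}$; this forces $C\,|\tilde\Gamma(k)|\ge1$ for all admissible $k$, contradicting $|\tilde\Gamma(k)|\searrow0$ as $k\nearrow\mu_1$. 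Without this argument (or some substitute valid for $V_0$-solutions), item $(i)$ of your Fredholm step is unproven and the existence proof does not close.
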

\begin{remark}\label{RemarkLaplOperOnaCurve}
Taking Assumption \ref{MainAssumption}~(\ref{parametric}) into account, the derivative $(u(r_i(s)))'=d u(r_i(s))/d s$ of a smooth function~$u$ is denoted with\/ $\nabla_{\! \tau\,} u$, the intrinsic gradient of $u$ on $\Gamma_i$, and the second derivative $(u(r_i(s)))'' =d^2 u(r_i(s))/d s^2$ with $\Delta_{\tau \,} u$, the Laplace--Beltrami operator on\/ $\Gamma_i$. For shortness, we also write $u_s$ and~$u_{ss}$ in place of\/ $\nabla_{\! \tau\,} u$ and\/ $\Delta_{\tau \,} u$, respectively.
\end{remark}
As to the first cornerstone issue (see Definition \ref{def:inhomogeneous} given below), taking $a_2$ in a suitable function space gives the third condition in~(\ref{inhomogeneous}) a variational structure: namely, the condition becomes
\begin{equation*}
u_\nu-\nabla_{\!\tau\,} (a_2 \, \nabla_{\!\tau\,} u)+\nabla_{\!\tau\,} a_2 \, \nabla_{\!\tau\,} u  +a_0 \, u=g\quad \textrm{on}\quad \Gamma_\nu.
\end{equation*}
Secondly, when $a_2$ is constant, the coercivity of the bilinear form associated to the definition of a weak solution is essentially automatic, while in the non-constant case coercivity is gained (as shown in Lemma \ref{LemmaBilinearFormAdHoc}) by adding a convenient term to both sides of the functional equation resulting from the weak formulation of problem \eqref{inhomogeneous} so to obtain a compact resolvent operator, successively used to invoke the Fredholm alternative. The weak formulation of the problem also relies on the existence of a sufficient regular lifting of the boundary data to the whole domain: in order to make this paper self-contained, we give details in Lemma~\ref{lifting}.

\subsection{Non-local interpretation of the problem}
To the sake of scientific completeness, we shortly mention that
problem~(\ref{inhomogeneous}) can be rephrased in terms of a suitable
non-local operator. The idea is that if the domain~$\Omega$ and the
boundary data $\varphi,\psi$ are sufficiently smooth, then the
solution $u$ of the Dirichlet problem
\begin{equation}\label{Dirichlet}
\begin{cases}
-\Delta u = f &\mbox{in $\Omega$;}
\\
u = \varphi &\mbox{on $\Gamma_{\! D}$;}
\\
u = \psi &\mbox{on $\Gamma_{\! \nu}$,}
\end{cases}
\end{equation}
is uniquely determined, and differentiable on~$\Gamma_\nu$. If the boundary value~$\varphi$ on~$\Gamma_{\! D}$ is kept fixed, and $\psi$ is let vary, then the
outward derivative $u_\nu$ along~$\Gamma_\nu$ can be thought of as the outcome of an
operator: namely the operator $L \colon \psi \mapsto u_\nu$, usually called
\textit{the Dirichlet-to-Neumann operator} (a related operator is the \textit{Steklov-Poincar\'e operator} defined in \cite[pp.~3-4]{QV}). The Dirichlet-to-Neumann operator is considered in detail in~\cite{CS} in the case when the bounded domain
$\Omega \subset \mathbb R^2$ is replaced by the half-space in $\mathbb
R^{d-1} \times (0,+\infty) \subset \mathbb R^d$, $d \ge 2$, and $\Gamma_{\! D} = \emptyset$. In the present case,
instead, the operator~$L$ acts on functions defined on the bounded,
possibly non-straight and disconnected curve~$\Gamma_{\! \nu}$. Such
an operator is non-local in the sense that any modification of the
given function~$\psi$ in a small neighborhood of any point ${\bf x}_0 \in
\Gamma_{\! \nu}$ implies a consequent modification of the
outcome~$u_\nu$ on the whole~$\Gamma_{\! \nu}$. Under convenient
assumptions, the Ventcel problem~(\ref{inhomogeneous}) is equivalent
to the single equation
\begin{equation}\label{single}
L\psi - a_2 \, \Delta_{\tau \,} \psi + a_0 \, \psi = g
\quad
\mbox{on $\Gamma_{\! \nu}$}
.
\end{equation}
Indeed, if $u$ is any solution of problem~(\ref{inhomogeneous}), then
its trace $\psi = u|_{\Gamma_{\! \nu}}$ solves equation~(\ref{single})
because $L\psi = u_\nu$. Conversely, if $\psi$ is any solution of
equation~(\ref{single}), then the solution~$u$ of the Dirichlet
problem~(\ref{Dirichlet}), which is used in the definition of~$L$, is
also a solution of the Ventcel problem~(\ref{inhomogeneous}). The
peculiarity of equation~(\ref{single}) lies in the fact that the
non-local operator~$L$ and the (local) Laplace--Beltrami operator are
compared to each other. This approach will not be developed in the
present paper; nonetheless, the transformation of a Ventcel problem into a single
non-local equation is investigated in detail
in~\cite[Section~2.2.1]{BNDHV}.
\section{Poincar\'e-type inequalities}
In this section we establish some estimates that resemble the well-known Poincar\'e inequality, and play a key role in the subsequent proof of uniqueness of the weak solution to problem~(\ref{inhomogeneous}). In particular, in Lemma~\ref{NormOnVGeneral} we also prove an interesting Poincar\'e inequality for the composite Sobolev space $V_0$ defined in~\eqref{DefiSpaceFunctionV}.
\begin{lemma}[Poincar\'e-type inequality in a bounded interval]\label{LemmaPTI}
Let\/ $(a,b)$ be a bounded interval on the real line, and let $v \in H^1((a,b))$. Since $v$ has a continuous extension to the closed interval\/ $[a,b]$, we may define
$$
v_0 = \min_{[a,b]} v,
\qquad
v_1 = \max_{[a,b]} v,
\qquad
\tilde I
=
\{\, s \in [a,b]
\mid
v_0 < v(s) < v_1 \}
.
$$
We claim that
\begin{equation}\label{PTI}
\int_{\tilde I}
\big(v(s) - v_0\big)^{\! 2} \, ds
\le
|\tilde I|^2
\int_{\tilde I} (v'(s))^2 \, ds
,
\end{equation}
where $|\tilde I|$ denotes the Lebesgue measure of\/~$\tilde I$.
\end{lemma}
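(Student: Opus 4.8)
The plan is to reduce the desired inequality to two elementary estimates whose product yields the claimed bound with the constant $|\tilde I|^2$. Throughout I set $M = v_1 - v_0$, so that the function $w := v - v_0$ satisfies $0 \le w \le M$ on $[a,b]$ (because $v_0 \le v \le v_1$) and $\tilde I = \{\, s \in [a,b] : 0 < w(s) < M \,\}$. Note that $w \in H^1((a,b))$ with $w' = v'$ and, being $H^1$ on an interval, $w$ admits an absolutely continuous representative on $[a,b]$.

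First I would record the trivial pointwise bound: since $0 < w < M$ on $\tilde I$, one has $\int_{\tilde I} w^2 \, ds \le M^2 \, |\tilde I|$. The whole point is therefore to establish the complementary estimate $M^2 \le |\tilde I| \int_{\tilde I} (w')^2 \, ds$; multiplying the two gives $\int_{\tilde I}(v - v_0)^2 \, ds \le M^2 |\tilde I| \le |\tilde I|^2 \int_{\tilde I}(v')^2 \, ds$, which is exactly \eqref{PTI}.

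To prove $M^2 \le |\tilde I|\int_{\tilde I}(w')^2$, I would use that $v$, being continuous on the compact interval $[a,b]$, attains its minimum and maximum at some points $s_0$ and $s_1$, which without loss of generality I take with $s_0 < s_1$; then $w(s_0) = 0$, $w(s_1) = M$, and absolute continuity gives $M = \big| \int_{s_0}^{s_1} w'(s)\,ds \big|$. The key observation is that the complement of $\tilde I$ in $[a,b]$ is the union of the level sets $\{w = 0\}$ and $\{w = M\}$, and that the derivative of an absolutely continuous function vanishes almost everywhere on any of its level sets; consequently $w' = 0$ a.e.\ on $[a,b]\setminus \tilde I$. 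This lets me confine the integral above to the set $E := (s_0, s_1)\cap \tilde I \subseteq \tilde I$ and apply the Cauchy--Schwarz inequality there, obtaining $M^2 = \big(\int_E w'\big)^2 \le |E| \int_E (w')^2 \le |\tilde I| \int_{\tilde I}(w')^2$.

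The step I expect to be the crux is precisely this confinement of the estimate to $\tilde I$: replacing the naive Cauchy--Schwarz factor $|s_1 - s_0|$ (or $b-a$) by $|\tilde I|$ is what produces the constant in \eqref{PTI}, and it hinges on the fact that $w'$ vanishes a.e.\ on the level sets where $v$ equals its extrema. The remaining pieces — continuity to secure $s_0,s_1$, absolute continuity to write $M$ as an integral of $w'$, and the pointwise bound $w < M$ — are routine. The degenerate case $M = 0$ (i.e.\ $v$ constant, $\tilde I = \emptyset$) makes both sides of \eqref{PTI} vanish and needs no separate argument.
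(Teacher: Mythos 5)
Your proof is correct and follows essentially the same route as the paper's: both hinge on the fundamental theorem of calculus, the fact that $v'$ vanishes a.e.\ on the level sets $\{v = v_0\}\cup\{v = v_1\}$ (i.e., a.e.\ off $\tilde I$), and the Cauchy--Schwarz inequality restricted to $\tilde I$, which is exactly what produces the constant $|\tilde I|^2$. The only difference is organizational: the paper keeps the pointwise quantity $v(s)-v_0$ throughout and integrates the squared pointwise bound over $\tilde I$ at the end, whereas you first majorize $v-v_0$ by the oscillation $M = v_1 - v_0$ and then estimate $M$ itself by the same chain of inequalities.
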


\begin{proof}
If $\tilde I = \emptyset$, i.e., if $v$ is constant, then (\ref{PTI}) obviously holds. Otherwise we argue as follows. Let $s_0 \in [a,b]$ be any point such that $v(s) = v_0$. By the fundamental theorem of calculus we get the estimate
$$
v(s) - v_0
\le
\int_{s_0}^s |v'(t)| \, dt
\le
\Vert v' \Vert_{L^1((a,b))}
.
$$%
By~\cite[Lemma~7.7]{GT} we have $v' = 0$ almost everywhere in~$(a,b)
\setminus \tilde I$, and hence we may replace $L^1((a,b))$ with
$L^1(\tilde I)$ in the inequality above, thus getting
$$
v(s) - \inf\limits_{(a,b)} v
\le
\Vert v' \Vert_{L^1(\tilde I)}
.
$$%
An application of the Cauchy-Schwarz inequality on the set~$\tilde I$
yields $\Vert v' \Vert_{L^1(\tilde I)} \le \Vert 1 \Vert_{L^2(\tilde I)} \,
\Vert v' \Vert_{L^2(\tilde I)} = |\tilde I|^\frac12 \, \Vert v'
\Vert_{L^2(\tilde I)}$. This and the preceding inequality yield the
pointwise estimate
$$
\big(v(s) - v_0\big)^{\! 2}
\le
|\tilde I| \, \Vert v' \Vert^2_{L^2(\tilde I)}
,
$$
and (\ref{PTI}) follows by integration on~$\tilde I$.
\end{proof}

\begin{corollary}[Poincar\'e-type inequality on a simple $C^1$-curve]
Let\/ $\Gamma$ be the trace of a function $r = r(s)$ belonging to the class $C^1([0,L],\mathbb R^d)$, $L \in (0,+\infty)$, $d \ge 2$, and satisfying $|r'(s)| = 1$ in $[0,L]$. Assume that\/ $\Gamma$ is a simple curve satisfying Assumption~\ref{MainAssumption} (\ref{simple}). Since each $w\in H^1(\Gamma)$ has a continuous representative, we may define
$$
w_0 = \min\limits_\Gamma w,
\qquad
w_1 = \max\limits_\Gamma w,
\qquad
\tilde \Gamma = \{\, {\bf x} \in \Gamma \mid w_0 < w({\bf x}) < w_1 \, \}
.
$$
We claim that
\begin{equation}\label{PoincPesataLemmaBISAntonio}
\int_{\tilde \Gamma}
\big(w(r(s)) - w_0\big)^{\! 2} \, ds
\le
|\tilde \Gamma|^2
\int_{\tilde \Gamma}
|\nabla_{\!\tau\,} w(r(s))|^2 \, ds
,
\end{equation}
where $|\tilde \Gamma|$ denotes the Hausdorff measure of\/~$\tilde \Gamma$.
\end{corollary}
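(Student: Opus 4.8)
The plan is to reduce \eqref{PoincPesataLemmaBISAntonio} to the one-dimensional inequality \eqref{PTI} by pulling everything back to the parameter interval $[0,L]$ through the arc-length parameterization $r$. First I would set $v(s) = w(r(s))$ for $s \in [0,L]$. By the very definition of $H^1(\Gamma)$ via the parameterization, $v \in H^1((0,L))$, so Lemma~\ref{LemmaPTI} is applicable to $v$ on the interval $(0,L)$. Since $r$ maps $[0,L]$ onto $\Gamma$, the extremal values are preserved: $\min_{[0,L]} v = w_0$ and $\max_{[0,L]} v = w_1$. Consequently the set $\tilde I = \{\, s \in [0,L] \mid w_0 < v(s) < w_1 \,\}$ attached to $v$ by Lemma~\ref{LemmaPTI} is precisely the preimage $r^{-1}(\tilde\Gamma)$, written in the parameter variable as $\{\, s \mid w_0 < v(s) < w_1 \,\}$.

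The key point is that $r$, being parameterized by arc length (that is, $|r'(s)| = 1$), transports the Lebesgue measure on $[0,L]$ to the one-dimensional Hausdorff measure on $\Gamma$. Hence $|\tilde\Gamma| = |\tilde I|$, and for any integrable $h$ on $\Gamma$ the change-of-variables formula gives $\int_{\tilde\Gamma} h \, d\mathcal H^1 = \int_{\tilde I} h(r(s)) \, ds$. Applying this with $h = (w - w_0)^2$ turns the left-hand side of \eqref{PoincPesataLemmaBISAntonio} into $\int_{\tilde I} (v(s) - w_0)^2 \, ds$; recalling from Remark~\ref{RemarkLaplOperOnaCurve} that the intrinsic gradient satisfies $\nabla_{\!\tau\,} w(r(s)) = v'(s)$, so that $|\nabla_{\!\tau\,} w(r(s))|^2 = (v'(s))^2$, the right-hand side becomes $|\tilde I|^2 \int_{\tilde I} (v'(s))^2 \, ds$. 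Thus both sides of \eqref{PoincPesataLemmaBISAntonio} coincide with the two sides of \eqref{PTI} for $v$, and invoking Lemma~\ref{LemmaPTI} closes the argument at once.

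The only step requiring genuine care — and the main, albeit minor, obstacle — is the measure-theoretic identification when $\Gamma$ is a closed curve. In that case $r(0) = r(L)$, so $r$ fails to be injective at the endpoints; however this non-injectivity is confined to the two-point set $\{0, L\}$, which has zero Lebesgue measure, so it affects neither the change-of-variables formula nor the equality $|\tilde\Gamma| = |\tilde I|$. The simplicity hypothesis of Assumption~\ref{MainAssumption}~(\ref{simple}) guarantees that $r$ is injective on $(0,L)$, ruling out any overcounting in the passage between $\tilde\Gamma$ and $\tilde I$, and thereby legitimizing the reduction described above.
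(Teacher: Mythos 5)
Your proposal is correct and follows essentially the same route as the paper: pull back to the parameter interval via $v(s) = w(r(s))$, use $|r'(s)| = 1$ to identify the Hausdorff measure of $\tilde\Gamma$ with the Lebesgue measure of $\tilde I$ and the intrinsic gradient $\nabla_{\!\tau\,} w$ with $v'$, and then invoke Lemma~\ref{LemmaPTI}. Your additional remark on the measure-zero non-injectivity at the endpoints of a closed curve is a harmless refinement that the paper leaves implicit.
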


\begin{proof}
Let $(a,b) = (0,L)$ and $v(s) = w(r(s))$. Since $|r'(s)| = 1$, the Hausdorff measure (or total length) of $\tilde \Gamma$ equals the Lebesgue measure of the set $\tilde I$ in Lemma~\ref{LemmaPTI}. Furthermore, by the definition of the intrinsic gradient $\nabla_{\! \tau \,} w$ (see Remark \ref{RemarkLaplOperOnaCurve}) we have $|\nabla_{\! \tau \,} w(s)| = |v'(s)|$, and the conclusion is accomplished through inequality~(\ref{PTI}).
\end{proof}
In order to define an appropriate functional setting where facing problem \eqref{inhomogeneous}, it is convenient to introduce the following vector spaces on a domain~$\Omega$ satisfying Assumption \ref{MainAssumption}:
\begin{equation*}
V
=
\{\,
v \in H^1(\Omega)
\mid
v_{|_{\Gamma_{\! \nu}}} \in H^1(\Gamma_{\! \nu})
\,\}
,
\end{equation*}
and
\begin{equation}\label{DefiSpaceFunctionV}
V_0
=
\{\,
v \in H^1(\Omega)
\mid
v_{|_{\Gamma_{\! \nu}}} \in H^1_0(\Gamma_{\! \nu}),\
v_{|_{\Gamma_{\! D}}} = 0
\,\},
\end{equation}
both endowed with the  norm
\begin{equation}\label{NormOnVGeneral} 
\lVert v \rVert^2_{V}
:=
\int_\Omega |Du|^2 \, d {\bf x}
+\int_\Omega u^2 \, d {\bf x}+
\int_{\Gamma_\nu} |\nabla_{\!\tau\,} u|^2 \, ds+\int_{\Gamma_\nu}u^2 \,ds
\end{equation}
where $Du=Du(x,y)=(u_x(x,y),u_y(x,y))$ is the gradient of $u$. We have:
\begin{lemma}[Poincar\'e inequality for $V_0$]\label{PoincPesataLemma}
Let\/ $\Omega$ and\/ $\Gamma_{\! \nu}$ be as in Assumption \ref{MainAssumption}, and
let\/ $V_0$ be given by\/~{\rm(\ref{DefiSpaceFunctionV})}. There exists a
constant $L > 0$ such that the inequality $\Vert v \Vert_{L^2(\Gamma_\nu)} \le L \, \Vert Dv \Vert_{L^2(\Omega)}$ holds for every $v \in V_0$.
\end{lemma}
\begin{proof}
By the usual Poincar\'e inequality (see \cite[Theorem 7.91]{Salsa}), there exists a constant~$C_P$ such that $\Vert v \Vert_{H^1(\Omega)} \le C_P \, \Vert Dv \Vert_{L^2(\Omega)}$. Furthermore, by the trace inequality \cite[Theorem~7.82]{Salsa} we also have $\Vert v \Vert_{L^2(\Gamma_\nu)} \le C \, \Vert v \Vert_{H^1(\Omega)}$ for a convenient~$C$, and the conclusion follows.
\end{proof}

\begin{remark}
Some care is needed when dealing with the Poincar\'e inequality in the space~$V_0$: in particular, it is not to be expected that\/ $\Vert w \Vert_{L^2(\Gamma_{\!\nu})} \le C \, \Vert \nabla_{\!\tau\,} w \Vert_{L^2(\Gamma_{\!\nu})}$ for some constant~$C$ and for all $w \in V_0$. To construct a counterexample, let us consider the annulus\/ $\Omega = B_{R_1}(0) \setminus \overline B_{R_0}(0)$, with\/ $\Gamma_{\! \nu} = \partial B_{R_1}(0)$ and\/ $\Gamma_{\! D} = \partial B_{R_0}(0)$ as in Example~\ref{ring-example}. The radial function $w({\bf x}) = (|{\bf x}| - R_0)/(R_1 - R_0)$ belongs to~$V_0$ and satisfies $\Vert w \Vert_{L^2(\Gamma_{\!\nu})} = |\Gamma_{\!1}|^\frac12$ as well as $\Vert \nabla_{\!\tau\,} w \Vert_{L^2(\Gamma_{\!\nu})} = 0$. Hence a constant~$C$ as above cannot exist.
\end{remark}
\section{Definition of  weak solutions}
As a consequence of Lemma \ref{PoincPesataLemma}, and by means of the two-dimensional Poncar\'e inequality, the following norm on $V_0$ (recall the definition in \eqref{DefiSpaceFunctionV}) 
\begin{equation}\label{NormOnV0}
\lVert v \rVert^2_{V_0}
:=
\int_\Omega |Du|^2 \, d {\bf x}
+
\int_{\Gamma_\nu} |\nabla_{\!\tau\,} u|^2 \, ds
\end{equation}
is equivalent to  \eqref{NormOnVGeneral}, and standard procedures show that $(V_0,\lVert  \cdot \rVert_{V_0})$ is a weighted Sobolev space (see, for instance,  \cite{BNDHV,CLNV,Nicaise-Li-Mazzucato}).
On the other hand, we also need that the Dirichlet datum~$\varphi \colon \Gamma_{\! D} \to \mathbb R$ in problem~(\ref{inhomogeneous}) has a lifting $\tilde \varphi \in V$, i.e., we need that $\varphi$ is the trace on~$\Gamma_{\! D}$ of some $\tilde \varphi \in V$. Before proceeding further, we give a sufficient condition for~$\varphi$ to satisfy such a requirement:
\begin{lemma}[Existence of a lifting]\label{lifting}
Let Assumption \ref{MainAssumption} be complied. If $\varphi \colon \Gamma_{\! D} \to \mathbb R$ satisfies a uniform Lipschitz condition, then there exists $\tilde \varphi \in V$ such that $\varphi = \tilde \varphi\vert_{\Gamma_{\! D}}$.
\end{lemma}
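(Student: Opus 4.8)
The plan is to construct the lifting $\tilde\varphi$ explicitly and then verify it lands in the space $V$. The key observation is that $V$ imposes two requirements on a function $v\in H^1(\Omega)$: first, $v\in H^1(\Omega)$, and second, $v|_{\Gamma_{\!\nu}}\in H^1(\Gamma_{\!\nu})$. A uniformly Lipschitz function $\varphi$ on $\Gamma_{\!D}$ can, by the classical Kirszbraun or McShane extension theorem, be extended to a uniformly Lipschitz function $\Phi\colon\mathbb R^2\to\mathbb R$ on the whole plane with the same Lipschitz constant (the McShane extension $\Phi(\mathbf x)=\inf_{\mathbf y\in\Gamma_{\!D}}\big(\varphi(\mathbf y)+\mathrm{Lip}(\varphi)\,|\mathbf x-\mathbf y|\big)$ does the job componentwise). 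I would then set $\tilde\varphi=\Phi|_{\overline\Omega}$.

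First I would check $\tilde\varphi\in H^1(\Omega)$. Since $\Phi$ is globally Lipschitz, it is differentiable almost everywhere by Rademacher's theorem, its weak gradient is bounded by $\mathrm{Lip}(\varphi)$ almost everywhere, and on the bounded domain $\Omega$ this gives $\tilde\varphi\in W^{1,\infty}(\Omega)\subset H^1(\Omega)$. By construction $\tilde\varphi|_{\Gamma_{\!D}}=\varphi$, so the boundary datum is matched. Next I would verify the intrinsic regularity on $\Gamma_{\!\nu}$. Using the parametrization $r_i\in C^1([0,L_i],\mathbb R^2)$ from Assumption \ref{MainAssumption}~(\ref{parametric}), the restriction of $\tilde\varphi$ to each curve $\Gamma_i$ is the composition $s\mapsto\Phi(r_i(s))$. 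This is Lipschitz in $s$ because $\Phi$ is Lipschitz and $r_i$ has $|r_i'(s)|=1$, so the composition is absolutely continuous on $[0,L_i]$ with derivative bounded by $\mathrm{Lip}(\varphi)$ almost everywhere; hence $\tilde\varphi(r_i(\cdot))\in H^1((0,L_i))$ for each $i$, and summing over the finitely many curves gives $\tilde\varphi|_{\Gamma_{\!\nu}}\in H^1(\Gamma_{\!\nu})$. Combining the two facts yields $\tilde\varphi\in V$ with $\tilde\varphi|_{\Gamma_{\!D}}=\varphi$, which is the claim.

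The step I expect to require the most care is the interplay between the two notions of trace on $\partial\Omega$ and the precise meaning of ``$\tilde\varphi|_{\Gamma_{\!D}}=\varphi$'' and ``$\tilde\varphi|_{\Gamma_{\!\nu}}\in H^1(\Gamma_{\!\nu})$.'' Because $\Phi$ is continuous up to the boundary, the $H^1$-trace of $\tilde\varphi$ coincides with the pointwise restriction of the continuous function $\Phi$, so the two interpretations agree and no subtle trace-theory argument is needed; this continuity is precisely what the Lipschitz hypothesis buys us. A secondary delicate point is ensuring the finitely many curves $\Gamma_i$ are handled uniformly: since $N$ is finite and each $r_i\in C^1$ is arclength-parametrized, the bound on $\|(\tilde\varphi\circ r_i)'\|_{L^2((0,L_i))}$ is uniform in $i$, so assembling the global estimate on $\Gamma_{\!\nu}=\bigcup_i\Gamma_i^\circ$ is immediate. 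The regularity of the curves being only $C^1$ (rather than smoother) is not an obstacle here, since we only differentiate $\tilde\varphi$ once along each curve and the arclength condition $|r_i'|=1$ keeps the chain-rule estimate clean.
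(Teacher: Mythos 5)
Your proof is correct and follows essentially the same route as the paper: both invoke McShane's extension theorem to produce a globally Lipschitz extension of $\varphi$ to $\mathbb R^2$, and then observe that Lipschitz regularity gives membership in $W^{1,\infty}\subset H^1$ both in $\Omega$ and along $\Gamma_{\!\nu}$. Your write-up merely spells out the details (Rademacher's theorem, the chain rule along each arclength-parametrized curve $\Gamma_i$, and the consistency of traces) that the paper compresses into two sentences.
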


\begin{proof}
By McShane's lemma \cite[Theorem~1]{McShane}, there exists a uniformly Lipschitz function $\tilde \varphi \colon \mathbb R^2 \to \mathbb R$ coinciding with~$\varphi$ on~$\Gamma_{\! D}$. In particular, the restriction $\tilde \varphi|_{\Gamma_{\!\nu}}$ belongs to the Sobolev space $W^{1,\infty}(\Gamma_{\!\nu}) \subset H^1(\Gamma_{\!\nu})$, and therefore $\tilde \varphi \in V$.
\end{proof}

\subsection{The inhomogeneous problem }
Following \cite[(10), p.~314]{Evans-2010-PDEs} and \cite[(8.3)]{GT}, we allow $f$ and~$g$ be given by $f = f_1 + \mathop{\rm div}{\bf f}_2$ and $g = g_1 + \nabla_{\!\tau\,} g_2$. Such equalities are intended in the weak sense: namely, every choice of $f_1 \in L^2(\Omega, \allowbreak \mathbb R)$, ${\bf f}_2 \in L^2(\Omega,\mathbb R^2)$ and $g_1,g_2 \in L^2(\Gamma_{\!\nu})$ gives rise to two linear, continuous operators $L_f,L_g$ acting on the function space~$V_0$ through
\begin{align*}
L_f \colon w
&\mapsto
\int_\Omega
(f_1 \, w - {\bf f}_2 \cdot Dw)
\,
d {\bf x}
,
\qquad
L_g \colon w
\mapsto
\int_{\Gamma_{\! \nu}}
(g_1 \, w - g_2 \, \nabla_{\!\tau\,} w)
\,
ds
.
\end{align*}
For shortness, we let ${\cal L}w$ be the sum of the two operators above:
$$
{\cal L}w
=
L_f + L_g.
$$
If the boundary datum~$\varphi \colon \Gamma_{\! D} \to \mathbb R$ has
a lifting $\tilde \varphi \in V$, we may define the function space
$$
V_{\!\varphi}
=
\{\,
u \in H^1(\Omega)
\mid
u - \tilde \varphi \in V_0
\,\}
$$%
and give the definition of  weak solution of problem~(\ref{inhomogeneous}):
\begin{definition}[Weak solution of the inhomogeneous problem]\label{def:inhomogeneous}
Let Assumption \ref{MainAssumption} be complied and let $f_1 \in L^2(\Omega,\mathbb R)$, ${\bf f}_2 \in L^2(\Omega,\mathbb R^2)$, $g_1,g_2 \in L^2(\Gamma_{\! \nu})$, $a_2 \in W^{1,\infty}(\Gamma_{\! \nu})$ and $a_0 \in L^\infty(\Gamma_{\!\nu})$. Suppose that $\varphi \colon \Gamma_{\! D} \to \mathbb R$ has a lifting $\tilde \varphi \in V$. A weak solution of problem~(\ref{inhomogeneous}) is a function $u \in V_{\!\varphi}$ such that for every $w \in V_0$ the following equality holds:
\begin{equation}\label{weak}
\int_\Omega
Du \cdot Dw \, d {\bf x}
+
\int_{\Gamma_{\! \nu}}
\Big(
\big(
a_2 \, \nabla_{\! \tau \,} w
+
w \, \nabla_{\! \tau \,} a_2
\big)
\, \nabla_{\! \tau \,} u
+
a_0 \, uw
\Big)
ds
=
{\cal L}w
.
\end{equation}
\end{definition}
The definition is motivated by the following property:
\begin{proposition}
Let Assumption \ref{MainAssumption} be complied and $f \in C^0(\Omega)$, $g \in C^0(\Gamma_{\! \nu})$, $a_2 \in C^1(\Gamma_{\! \nu})$, $a_0 \in C^0(\Gamma_{\! \nu})$ and $\varphi \in C^1(\Gamma_{\! D})$. Any function $u \in C^2(\overline \Omega)$ is a weak solution of problem\/~{\rm(\ref{inhomogeneous})} if and only if the three equalities in\/~{\rm(\ref{inhomogeneous})} hold pointwise.
\end{proposition}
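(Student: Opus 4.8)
The plan is to establish both implications by combining Green's first identity in the planar domain $\Omega$ with an integration by parts performed \emph{along} each boundary curve $\Gamma_i$. Since $f$ and $g$ are here merely continuous, throughout I would use the trivial decomposition $f_1 = f$, ${\bf f}_2 = 0$, $g_1 = g$, $g_2 = 0$, so that the functional of Definition~\ref{def:inhomogeneous} reduces to $\mathcal L w = \int_\Omega f\,w\,d{\bf x} + \int_{\Gamma_\nu} g\,w\,ds$ for every $w \in V_0$.

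First I would show that a classical solution is a weak one. Fixing an arbitrary $w \in V_0$, I would multiply the equation $-\Delta u = f$ by $w$, integrate over $\Omega$, and apply Green's first identity; because $w$ vanishes on $\Gamma_D$ by the definition of $V_0$, the boundary term survives only on $\Gamma_\nu$ and yields $\int_{\Gamma_\nu} u_\nu\, w\,ds$. Substituting the Ventcel relation $u_\nu = g + a_2\,\Delta_\tau u - a_0\,u$ and then integrating by parts the term $\int_{\Gamma_\nu} a_2\,(\Delta_\tau u)\,w\,ds$ curve by curve—recalling from Remark~\ref{RemarkLaplOperOnaCurve} that $\Delta_\tau u = u_{ss}$ and $\nabla_{\!\tau}u = u_s$—transfers one tangential derivative onto $a_2 w$ through $\partial_s(a_2 w) = (\nabla_{\!\tau} a_2)\,w + a_2\,\nabla_{\!\tau} w$. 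Collecting the terms reproduces exactly the bilinear expression in~(\ref{weak}), while $u \in V_{\!\varphi}$ follows from $u|_{\Gamma_D} = \varphi$.

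The step demanding the most care, and the main technical obstacle, is the vanishing of the endpoint contributions $\big[a_2\,(\nabla_{\!\tau} u)\,w\big]$ produced by the curvewise integration by parts. On an open curve (when $r_i(0) \ne r_i(L_i)$) the membership $w|_{\Gamma_\nu} \in H^1_0(\Gamma_\nu)$ forces $w$ to vanish at both endpoints $s = 0$ and $s = L_i$, killing the term; on a closed curve (when $r_i(0) = r_i(L_i)$, with $r'_i(0) = r'_i(L_i)$ by Assumption~\ref{MainAssumption}~(\ref{simple})) there are no genuine endpoints and the two contributions coincide and cancel by periodicity. Verifying this dichotomy rigorously is precisely what distinguishes the composite space $V_0$ from a naive $H^1$ setting and is what renders the tangential operator self-adjoint in the weak formulation.

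For the converse, assume $u \in C^2(\overline\Omega) \cap V_{\!\varphi}$ is a weak solution. The Dirichlet condition $u|_{\Gamma_D} = \varphi$ is immediate from $u - \tilde\varphi \in V_0$. Running the previous computation backwards rewrites~(\ref{weak}) as
\[
\int_\Omega (-\Delta u - f)\,w\,d{\bf x}
+
\int_{\Gamma_\nu}\big(u_\nu - a_2\,\Delta_\tau u + a_0\,u - g\big)\,w\,ds = 0
\qquad \forall\, w \in V_0 .
\]
Testing first with $w \in C_c^\infty(\Omega) \subset V_0$ annihilates the boundary integral, and the fundamental lemma of the calculus of variations combined with the continuity of $-\Delta u - f$ gives $-\Delta u = f$ pointwise in $\Omega$. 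With the interior integral thus vanishing for every $w \in V_0$, only the boundary integral remains; testing with functions $w \in V_0$ whose traces exhaust $C_c^\infty(\Gamma_{\!i}^\circ)$—obtained by extending a smooth, compactly supported function on $\Gamma_{\!i}^\circ$ to a neighborhood disjoint from $\Gamma_D$ and from the other curves—and invoking the fundamental lemma once more on each $\Gamma_{\!i}^\circ$ forces the continuous integrand $u_\nu - a_2\,\Delta_\tau u + a_0\,u - g$ to vanish pointwise, which is the Ventcel condition. This closes the equivalence.
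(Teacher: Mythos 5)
Your proposal is correct and follows essentially the same route as the paper's proof: Green's identity in $\Omega$, the curvewise tangential integration by parts whose endpoint terms vanish via the $H^1_0$ condition on open curves and via periodicity (using $r_i'(0)=r_i'(L_i)$) on closed ones, and the fundamental lemma of the calculus of variations applied first with interior test functions and then with boundary test functions extended into $\overline\Omega$. The only cosmetic difference is the class of boundary test functions: the paper takes Lipschitz $\eta \in W^{1,\infty}(\Gamma_{\!\nu}) \cap H^1_0(\Gamma_{\!\nu})$ extended by McShane's lemma, while you use traces exhausting $C_c^\infty(\Gamma_{\! i}^\circ)$ extended by a cutoff — an equivalent device for the same du Bois-Reymond argument.
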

\begin{proof}
Step 1. Let us prepare an identity to be used afterwards. By the product rule, for every $w \in \allowbreak  V_0$ we have $\nabla_{\! \tau} \big((a_2 \, w) \, \nabla_{\! \tau \,} u\big) = \nabla_{\! \tau} (a_2 \, w) \allowbreak\, \nabla_{\! \tau \,} u \allowbreak + a_2 \, w \, \Delta_{\tau \,} u$ in the weak sense on~$\Gamma_i$, $i = 1, \dots, N$, as well as $\nabla_{\! \tau} (a_2 \, w) = a_2 \, \nabla_{\! \tau \,} w + w \, \nabla_{\! \tau \,} a_2$; hence
\begin{align}
\nonumber
\int_{\Gamma_i}
\big(
a_2 \, \nabla_{\! \tau \,} w
+
w \, \nabla_{\! \tau \,} a_2
\big)
\,
\nabla_{\! \tau \,} u
\,
ds
&=
\int_{\Gamma_i}
\nabla_{\! \tau} (a_2 \, w)
\,
\nabla_{\! \tau \,} u
\,
ds
\\
\noalign{\medskip}
\label{observe}
&=
\int_{\Gamma_i}
\nabla_{\! \tau}
\big((a_2 \, w) \, \nabla_{\! \tau \,} u\big)
\,
ds
-
\int_{\Gamma_i}
a_2 \, w \, \Delta_{\tau \,} u
\,
ds
.
\end{align}
Furthermore, by the fundamental theorem of calculus we also have
\begin{align*}
\int_{\Gamma_i}
& \nabla_{\! \tau}
\big((a_2 \, w) \, \nabla_{\! \tau \,} u\big)
\,
ds
= \\ & a_2(r_i(L_i)) \, w (r_i(L_i)) \, \nabla_{\! \tau \,} u(r_i(L_i))-a_2(r_i(0)) \, w (r_i(0)) \, \nabla_{\! \tau \,} u(r_i(0))
\end{align*}
for every $i = 1, \dots, N$. If $r_i(0) = r_i(L_i)$, then the right-hand side obviously vanishes. If, instead, $r_i(0) \ne r_i(L_i)$, then $w(r_i(0)) = w(r_i(L_i)) = 0$ because $w \in H^1_0(\Gamma_i)$. Consequently, we arrive at
$$
\int_{\Gamma_i}
\nabla_{\! \tau}
\big((a_2 \, w) \, \nabla_{\! \tau \,} u\big)
\,
ds
=
0\
\mbox{for every $i = 1, \dots, N$.}
$$%
Plugging this into~(\ref{observe}), and summing over~$i$ leads to the identity
\begin{equation}\label{identity}
\int_{\Gamma_{\! \nu}}
\big(
a_2 \, \nabla_{\! \tau \,} w
+
w \, \nabla_{\! \tau \,} a_2
\big)
\,
\nabla_{\! \tau \,} u
\,
ds
=
-
\int_{\Gamma_{\! \nu}}
a_2 \, w \, \Delta_{\tau \,} u
\,
ds
.
\end{equation}
Step 2. Suppose that $u \in C^2(\overline \Omega)$ is a weak solution of problem~(\ref{inhomogeneous}). Then by the definition of a weak solution we have $u \in V_{\!\varphi}$ and therefore $u = \varphi$ pointwise on~$\Gamma_{\!D}$ (cf.~\cite[Corollary~1.5.1.6]{Grisvard}). Moreover, since $C^1_c(\Omega) \subset V_0$, we may take any $\psi \in C^1_c(\Omega)$ and let $w = \psi$ in~(\ref{weak}), thus obtaining
$$
\int_\Omega
Du \cdot D\psi \, d {\bf x}
=
\int_\Omega
f\psi
\,
d {\bf x}
\quad
\forall \psi \in C^1_c(\Omega)
,
$$%
which implies
\begin{equation}\label{ptwOmega}
-\Delta u = f\ \mbox{in\/~$\Omega$}
\end{equation}
(see, for instance, \cite[Step~D, p.~293]{Brezis}). In order to prove the last equality in~\eqref{inhomogeneous}, we start from an arbitrary Lipschitz function $\eta \in W^{1,\infty}(\Gamma_{\!\nu}) \cap H^1_0(\Gamma_{\!\nu})$ and we define $w = \eta$ on~$\Gamma_{\!\nu}$, $w = 0$ on~$\Gamma_{\!D}$. Thus, $w$ is Lipschitz continuous on~$\partial \Omega$ and therefore, as mentioned in the proof of Lemma~\ref{lifting}, it has a Lipschitz continuous lifting to the whole plane~$\mathbb R^2$. We denote such a lifting again by~$w$, for simplicity, and observe that $w \in V_0$. Multiplying both sides of~(\ref{ptwOmega}) by $w$, and integrating by parts we obtain
\begin{equation}\label{partsOmega}
\int_\Omega
Du \cdot Dw \, d {\bf x}
-
\int_{\Gamma_{\! \nu}}
wu_\nu
\,
ds
=
\int_\Omega
fw
\,
d {\bf x}
.
\end{equation}
Recall that identity~(\ref{weak}) holds for all $w \in V_0$ by assumption. By comparing (\ref{weak}) with the equality above, we deduce that
\begin{equation}\label{intermediate}
\int_{\Gamma_{\! \nu}}
\Big(
wu_\nu
+
\big(
a_2 \, \nabla_{\! \tau \,} w
+
w \, \nabla_{\! \tau \,} a_2
\big)
\, \nabla_{\! \tau \,} u
+
a_0 \, uw
\Big)
ds
=
\int_{\Gamma_{\! \nu}}
gw
\,
ds
.
\end{equation}
Since $w = \eta$ on~$\Gamma_{\!\nu}$, this and the identity~(\ref{identity}) yield
\begin{equation}\label{integrated}
\int_{\Gamma_{\! \nu}}
\Big(
\eta u_\nu
-
a_2 \, \eta \, \Delta_{\tau \,} u
+
a_0 \, u\eta
\Big)
ds
=
\int_{\Gamma_{\! \nu}}
g\eta
\,
ds
,
\end{equation}
and the last equality in~\eqref{inhomogeneous} follows from the fundamental lemma of the calculus of variations (see \cite[Corollary~4.24]{Brezis}).

Step 3. Conversely, assume that a function $u \in C^2(\overline \Omega)$ satisfies the three equalities in~\eqref{inhomogeneous} pointwise. Then $u \in V_{\!\varphi}$ (see again~\cite[Corollary~1.5.1.6]{Grisvard}, or also Theorem~9.17 and the subsequent Remark~19 in~\cite{Brezis}). To proceed further, we multiply~(\ref{ptwOmega}) by an arbitrary test function $w \in V_0$ and integrate by parts, thus obtaining~(\ref{partsOmega}). On the other side, multiplying the last equality in~\eqref{inhomogeneous} by the same arbitrary~$w$, and integrating over~$\Gamma_{\! \nu}$ we get~(\ref{integrated}). This and the identity~(\ref{identity}) produce~(\ref{intermediate}), which summed to  (\ref{partsOmega}) leads to~(\ref{weak}), and the proof is complete.
\end{proof}
\subsection{The homogeneous problem }
With the aid of the lifting $\tilde \varphi \in V$ of the boundary
datum~$\varphi \colon \Gamma_{\! D} \to \mathbb R$, we may reduce the
inhomogeneous problem~(\ref{inhomogeneous}) to the homogeneous one
\begin{equation}\label{homogeneous}
\begin{cases}
-\Delta v = f_{\tilde\varphi} &\mbox{in $\Omega$;}
\\
v = 0 &\mbox{on $\Gamma_{\! D}$;}
\\
v_\nu - a_2 \, \Delta_{\tau \,} v + a_0 \, v = g_{\tilde\varphi}
&\mbox{on $\Gamma_{\! \nu}$,}
\end{cases}
\end{equation}
where $f_{\tilde\varphi},g_{\tilde\varphi}$ denote the linear, continuous operators on $V_0$, respectively defined as
\begin{align*}
f_{\tilde\varphi} \colon w
&\mapsto
L_f(w)
-
\int_\Omega
D \tilde\varphi
\cdot
Dw
\,
d {\bf x}
,
\\
\noalign{\medskip}
g_{\tilde\varphi} \colon w
&\mapsto
L_g(w)
-
\int_{\Gamma_{\! \nu}}
\Big(
a_0 \, \tilde \varphi \, w
+
\big(
a_2 \, \nabla_{\! \tau \,} w
+
w \, \nabla_{\! \tau \,} a_2
\big)
\, \nabla_{\! \tau \,} \tilde\varphi
\Big)
\,
ds
.
\end{align*}
\begin{remark}\label{RemarkFromInhToHom}
Any function $u \in V_{\tilde\varphi}$ is a weak solution of problem~(\ref{inhomogeneous}) if and only if the function $v = u - \tilde\varphi \in V_0$ is a weak solution of problem~(\ref{homogeneous}). For simplicity, in the sequel we will show that problem~(\ref{inhomogeneous}) is uniquely solvable by proving existence and uniqueness of a weak solution of problem~(\ref{homogeneous}).
\end{remark}
\section{\texorpdfstring{Unique solvability of the inhomogeneous problem: proof of Theorem \ref{Maintheorem}}{Unique solvability of the inhomogeneous problem: proof of Theorem \ref{Maintheorem}}}
In this section we will focus on the analysis concerning the existence and uniqueness of solutions to problem \eqref{homogeneous}. To this aim we first define, for any $a_2 \in W^{1,\infty}(\Gamma_{\! \nu})$ and $a_0 \in L^\infty(\Gamma_{\! \nu})$, some values which will be used throughout the main proofs. Precisely, we set
\begin{equation}\label{lambda-M}
\begin{cases}
\lambda_2
:=
\inf\limits_{\Gamma_{\! \nu}} a_2 > 0,
\quad \Lambda_2
:=
\sup\limits_{\Gamma_{\! \nu}} a_2,
\quad 
M
:=
\sup\limits_{\Gamma_{\! \nu}}
\left| \nabla_{\!\tau\,} a_2 \right|,\\
\noalign{\medskip}
 \lambda_0:=\inf\limits_{\Gamma_{\! \nu}} a_0 \ge 0, \quad \Lambda_0:=\sup\limits_{\Gamma_{\! \nu}} a_0.
\end{cases}
\end{equation}
\subsection{Uniqueness of the solution}
\begin{lemma}[Uniqueness of the solution of the homogeneous problem]\label{LemmaUniqueness}
Let Assumption \ref{MainAssumption} be complied. Suppose that\/ $0<a_2 \in W^{1,\infty}(\Gamma_{\! \nu})$ and\/ $0\leq a_0 \in L^\infty(\Gamma_{\! \nu})$.
 If\/ $v \in V_0$ satisfies
\begin{equation}\label{zero}
\int_\Omega
Dv \cdot Dw \, d {\bf x}
+
\int_{\Gamma_{\! \nu}}
\Big(
\big(
a_2 \, \nabla_{\! \tau \,} w
+
w \, \nabla_{\! \tau \,} a_2
\big)
\, \nabla_{\! \tau \,} v
+
a_0 \, vw
\Big)
\, ds
=
0
\end{equation}
for every $w \in V_0$, then $v$ vanishes almost everywhere in\/~$\Omega$.
\end{lemma}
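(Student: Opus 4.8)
The plan is to run a truncation (De Giorgi–type) argument at levels approaching the extrema of $v$, testing the weak identity \eqref{zero} against the admissible function $w=(v-k)^+$ and exploiting the fact that the Poincaré constant on a super-level set shrinks with the set. Fix a level $k\ge 0$ and insert $w=(v-k)^+$ into \eqref{zero}. Since $v\in V_0$ and $k\ge0$, this truncation again lies in $V_0$ (it vanishes on $\Gamma_{\!D}$, and its trace stays in $H^1_0(\Gamma_{\!\nu})$), and the chain rule for Sobolev truncations gives $Dv\cdot Dw=|Dw|^2$ and $\nabla_{\!\tau}v\cdot\nabla_{\!\tau}w=|\nabla_{\!\tau}w|^2$ almost everywhere. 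Writing the boundary integrand as $\nabla_{\!\tau}(a_2w)\cdot\nabla_{\!\tau}v$, identity \eqref{zero} becomes
$$\int_\Omega|Dw|^2\,d{\bf x}+\int_{\Gamma_{\!\nu}}a_2\,|\nabla_{\!\tau}w|^2\,ds+\int_{\Gamma_{\!\nu}}w\,\nabla_{\!\tau}a_2\cdot\nabla_{\!\tau}w\,ds+\int_{\Gamma_{\!\nu}}a_0\,v\,w\,ds=0.$$
On the support $\{v>k\}$ one has $v>k\ge0$, whence $a_0vw\ge0$; together with $a_2\ge\lambda_2>0$ from \eqref{lambda-M}, three of the four terms are nonnegative, and the whole difficulty is concentrated in the sign–indefinite term $\int_{\Gamma_{\!\nu}}w\,\nabla_{\!\tau}a_2\cdot\nabla_{\!\tau}w$. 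This is precisely the obstruction created by the spatial dependence of $a_2$: it is what breaks coercivity and forces the Fredholm approach for existence.

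The key step is to control this term \emph{without} any smallness assumption on $a_2$. I would estimate $\bigl|\int_{\Gamma_{\!\nu}}w\,\nabla_{\!\tau}a_2\cdot\nabla_{\!\tau}w\bigr|\le M\,\Vert w\Vert_{L^2(\Gamma_{\!\nu})}\,\Vert\nabla_{\!\tau}w\Vert_{L^2(\Gamma_{\!\nu})}$ and then apply the Poincaré inequality \eqref{PoincPesataLemmaBISAntonio} (the Corollary to Lemma~\ref{LemmaPTI}) to $w=(v-k)^+$. Because $w$ vanishes wherever $v\le k$, its relevant Poincaré constant is the length $|\tilde\Gamma_k|$ of the super-level set $\{v>k\}\cap\Gamma_{\!\nu}$, so that $\Vert w\Vert_{L^2(\Gamma_{\!\nu})}\le|\tilde\Gamma_k|\,\Vert\nabla_{\!\tau}w\Vert_{L^2(\Gamma_{\!\nu})}$. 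Substituting, the identity yields
$$\int_\Omega|Dw|^2\,d{\bf x}+\bigl(\lambda_2-M\,|\tilde\Gamma_k|\bigr)\int_{\Gamma_{\!\nu}}|\nabla_{\!\tau}w|^2\,ds\le0.$$
Since the trace $v|_{\Gamma_{\!\nu}}$ is continuous (one–dimensional Sobolev embedding), $|\tilde\Gamma_k|\to0$ as $k$ increases to the top value $v_1=\max_{\Gamma_{\!\nu}}v$; hence for $k$ close enough to $v_1$ the factor $\lambda_2-M|\tilde\Gamma_k|$ is strictly positive. This is the crux, and the step I expect to be the main obstacle: the localized Poincaré constant beats the fixed ratio $M/\lambda_2$ precisely because the set on which $a_2$ may misbehave is made small, and this is exactly the reason the curve–Poincaré inequality with constant $|\tilde\Gamma|$ was established beforehand.

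For such a level $k$ both nonnegative terms must vanish, so $Dw=0$ in the connected set $\Omega$ and $w$ is constant; as $w=0$ on $\Gamma_{\!D}$, which has positive length, that constant is $0$, i.e. $(v-k)^+\equiv0$ and $v\le k<v_1$, contradicting the definition of $v_1$. (The borderline situation in which $v$ equals its maximal value on a portion of $\Gamma_{\!\nu}$ of positive length is ruled out by a separate, elementary argument, using that $v$ is harmonic in $\Omega$ by taking interior test functions in \eqref{zero}.) Therefore $\mathop{\mathrm{ess\,sup}}_\Omega v\le0$; applying the same reasoning to $-v$, which solves \eqref{zero} as well and keeps $a_0\ge0$, gives $v\ge0$, and the two bounds together force $v=0$ almost everywhere in $\Omega$.
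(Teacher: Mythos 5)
Your overall strategy (testing \eqref{zero} with $w=(v-k)^+$, invoking the curve Poincar\'e inequality with constant equal to the measure of a shrinking level set, and letting $k$ approach the maximum of the trace) is the same as the paper's, but your execution has a genuine gap at the crucial step, and the parenthetical patch you offer for it does not work. The inequality you rely on, $\Vert w\Vert_{L^2(\Gamma_{\!\nu})}\le|\tilde\Gamma_k|\,\Vert\nabla_{\!\tau\,}w\Vert_{L^2(\Gamma_{\!\nu})}$ with $\tilde\Gamma_k=\{v>k\}$, does not follow from the Corollary to Lemma~\ref{LemmaPTI} and is false in general for functions in $V_0$. That Corollary is a per-curve statement: on each $\Gamma_i$ it controls $w-w_0$, where $w_0=\min_{\Gamma_i}w$, and only on the set where $w$ lies \emph{strictly} between its minimum and its maximum. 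If on some curve the trace of $v$ is constant and exceeds $k$, or if on the oscillating curve the level set $\{v=\max_{\Gamma_i}v\}$ has positive length, then $w$ has a nonzero part on which $\nabla_{\!\tau\,}w=0$, invisible to your right-hand side; the Remark following Lemma~\ref{PoincPesataLemma} (the radial function on the annulus) is precisely a counterexample to inequalities of this shape in $V_0$. Relatedly, your claim that $|\tilde\Gamma_k|\to0$ as $k\nearrow v_1$ is unjustified: $\bigcap_{k<v_1}\{v>k\}=\{v=v_1\}$, which may have positive length, and continuity of the trace does not prevent this. Your proposed ``separate, elementary argument'' -- that harmonicity of $v$ in $\Omega$ rules out $v\equiv v_1$ on a set of positive length -- is not an argument: a harmonic function can perfectly well have boundary trace constant on an arc, and excluding that scenario for solutions of \eqref{zero} is essentially as hard as the uniqueness statement itself.

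The paper avoids both problems by never applying Cauchy--Schwarz globally. It keeps the indefinite term in product form $\int|\nabla_{\!\tau\,}w|\,w\,ds$; by \cite[Lemma~7.7]{GT} the factor $|\nabla_{\!\tau\,}w|$ vanishes a.e.\ where $v\le k$ and where $v$ equals its per-curve maximum, so the integral localizes automatically to the sets $\tilde\Gamma_i=\{\,{\bf x}\in\Gamma_i \mid k<v({\bf x})<\max_{\Gamma_i}v\,\}$, defined with strict inequalities on \emph{both} sides, and only there are Cauchy--Schwarz and the Poincar\'e inequality \eqref{PoincPesataLemmaBISAntonio} applied. With this definition $\bigcap_k\tilde\Gamma(k)=\emptyset$, so $|\tilde\Gamma(k)|\to0$ follows from continuity of the measure with no extra hypothesis. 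Moreover, the paper first restricts to the curves on which $v$ actually oscillates and chooses $k>\mu_0^+=\max\{0,\max_i\min_{\Gamma_i}v\}$, which guarantees $\min_{\Gamma_i}w=0$ on each such curve, i.e.\ that the Corollary bounds $w$ itself rather than $w-w_0$; your proposal skips this reduction, which is exactly what lets constant-on-a-curve traces slip through. Note finally that the paper's truncation argument proves only that $\nabla_{\!\tau\,}v=0$ a.e.\ on $\Gamma_{\!\nu}$; the conclusion $v=0$ then comes from testing with $w=v$ (using $a_0\ge0$ and $v=0$ on $\Gamma_{\!D}$), and it is this last step -- not the truncation -- that disposes of the ``constant trace'' configurations your argument cannot reach.
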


\begin{proof}
Before proceeding further, observe that if $v \in V_0$
fulfills relation~(\ref{zero}) for every $w \in V_0$, and if furthermore
$\nabla_{\! \tau \,} v$ vanishes a.e.\ on~$\Gamma_{\!
  \nu}$, then
$$
\int_\Omega
Dv \cdot Dw \, d {\bf x}
+
\int_{\Gamma_{\! \nu}}
a_0 \, vw
\, ds
=
0
.
$$%
In such a case, choosing $w = v$ and recalling that $a_0 \ge 0$ on~$\Gamma_{\! \nu}$, it follows that $v = 0$. Hence, in order to prove the lemma, we consider an arbitrary $v \in V_0$ satisfying~(\ref{zero}) for every $w \in V_0$, and show that $\nabla_{\!\tau\,} v = 0$ a.e.\ on~$\Gamma_{\! \nu}$.
Suppose, by contradiction, that there exists a positive
integer $N_1 \le N$ such that the oscillation $\omega_i$ given by
$$
\omega_i
=
\max\limits_{\Gamma_{\! i}} v
-
\min\limits_{\Gamma_{\! i}} v
$$%
is positive if and only if $i \le N_1$ (see Figure~\ref{Fig.1}). Since $N_1$ is a finite number, there exists a positive~$\varepsilon_0$ such that $\omega_i \ge \varepsilon_0$ for every $i \le N_1$. Observe that at least one of the following two inequalities must hold:
$$
\mu_1 := \max\limits_{1 \le i \le N_1}
\max\limits_{\Gamma_{\! i}} v > 0;
\qquad
\min\limits_{1 \le i \le N_1}
\min\limits_{\Gamma_{\! i}} v < 0
.
$$
We consider the first case, the second one being analogous. Furthermore, without loss of generality we assume that $\mu_1$ is attained on $\Gamma_{\!1}$. By the definition of $\mu_1$ and~$\varepsilon_0$, for every $i = 1, \dots, N_1$ we have
$$
\min_{\Gamma_{\!i}} v
\le
\max_{\Gamma_{\!i}} v - \varepsilon_0
\le
\mu_1 - \varepsilon_0
$$%
and therefore
\begin{equation}\label{Gamma_1}
\min\limits_{\Gamma_{\! 1}} v
\le
\max\limits_{1 \le i \le N_1}
\min\limits_{\Gamma_{\! i}} v
=:
\mu_0
\le
\mu_1 - \varepsilon_0
<
\mu_1
=
\max\limits_{\Gamma_{\! 1}} v
.
\end{equation}
Arguing as in~\cite[Theorem~8.1]{GT} and in \cite[Theorem~3.2.1]{PS}, let us define $\mu_0^+ = \max\{\, \mu_0, 0 \,\}$ and choose a real number $k$ in the open interval $(\mu_0^+,\mu_1)$.
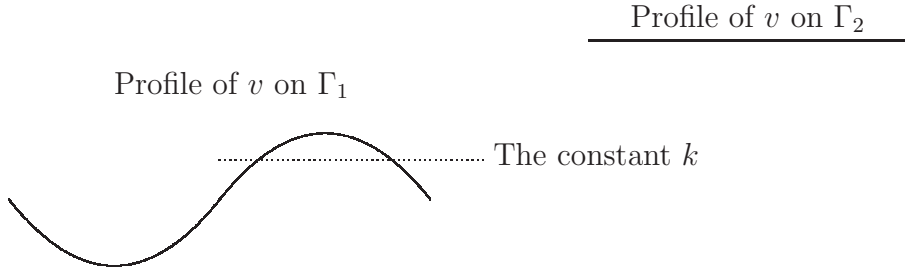
\begin{figure}[ht]
\centering
\begin{picture}(300,115)(0,20)
\qbezier(-20,50)(20,0)(60,50)
\put(20,90){Profile of $v$ on $\Gamma_1$}
\qbezier(60,50)(100,100)(140,50)
\qbezier[50](60,65)(110,65)(160,65)
\put(164,63){The constant $k$}
\put(216,115){Profile of $v$ on $\Gamma_2$}
\put(200,110){\line(1,0){120}}
\end{picture}
\caption{Sample profile of $v$ on the (rectified) curves $\Gamma_1$, $\Gamma_2$ ($N_1 = 1$)}
\label{Fig.1}
\end{figure}
Consider the function $w = (v - k)^+$. Since $k$ is positive, we have $w \in V_0$. Apart from a negligible set, the gradient~$Dw$ in~$\Omega$ is given by
\begin{equation}\label{Dw}
Dw({\bf x})
=
\begin{cases}
Dv({\bf x}), &\mbox{if $v({\bf x}) > k$,}
\\
\noalign{\medskip}
0, &\mbox{if $v({\bf x}) \le k$,}
\end{cases}
\end{equation}
and a similar representation holds for $\nabla_{\! \tau \,} w$
on~$\Gamma_{\! \nu}$. In particular, we have
$$
\nabla_{\! \tau \,} w \, \nabla_{\! \tau \,} v
=
|\nabla_{\! \tau \,} w|^2,
\qquad 
w \, \nabla_{\! \tau \,} v = w \, \nabla_{\! \tau \,} w  
\quad  {{\textrm{a.e.\ on}~\Gamma_{\! \nu}}}.
$$
As a consequence of position~(\ref{Dw}), the first integral in~(\ref{zero}) is non-negative, and hence
\begin{equation}\label{obvious}
\int_{\Gamma_{\! \nu}}
\big(
a_2 \nabla_{\! \tau \,} w
+
w \, \nabla_{\! \tau \,} a_2
\big)
\, \nabla_{\! \tau \,} v
\, ds
+
\int_{\Gamma_{\! \nu}}
a_0 \, vw \, ds
\le
0
.
\end{equation}
Let us focus on the product $vw$. By definition, the function $w = (v - k)^+$ is either positive or zero. When $w$ vanishes, the product~$vw$ obviously vanishes. When $w$ is positive, instead, $v - k$ is also positive and therefore $v > k > 0$. In conclusion, we have $vw \ge 0$ on~$\Gamma_{\! \nu}$. Since $a_0 \ge 0$, it follows that the last integral in~(\ref{obvious}) is non-negative, and therefore we arrive at
$$
\int_{\Gamma_{\! \nu}}
a_2 \, |\nabla_{\! \tau \,} w|^2 \, ds
\le
\int_{\Gamma_{\! \nu}}
|\nabla_{\! \tau \,} a_2|
\,
|\nabla_{\! \tau \,} w| \, w \, ds
.
$$%
Let $C = \max\{\, 1, \, M/\lambda_2 \,\}$, where $M,\lambda_2$ are as in \eqref{lambda-M}. In particular, we are assuming $\lambda_2 > 0$ (ellipticity). With this notation, we may write
\begin{equation}\label{C}
\int_{\Gamma_{\! \nu}}
|\nabla_{\! \tau \,} w|^2 \, ds
\le
C
\int_{\Gamma_{\! \nu}}
|\nabla_{\! \tau \,} w| \, w \, ds
.
\end{equation}
We aim to replace the domain of integration $\Gamma_{\!\nu}$ with the set $\tilde \Gamma = \bigcup\limits_{i = 1}^{N_1} \tilde \Gamma_i$, where
\begin{align*}
\tilde \Gamma_i
&=
\{\, {\bf x} \in \Gamma_{\! i}
\mid
0 < w({\bf x}) < \max\limits_{\Gamma_{\! i}} w \, \}
\\
\noalign{\medskip}
&=
\{\, {\bf x} \in \Gamma_{\! i}
\mid
k < v({\bf x}) < \max_{\Gamma_{\! i}} v \,\},\
i = 1, \dots, N_1
.
\end{align*}
By definition, $\tilde \Gamma_i$ is a (possibly empty) relatively open subset of~$\Gamma_i$. Furthermore, the equality $\Vert \nabla_{\!\tau\,} w \Vert_{L^2(\tilde \Gamma_{\! i})} = \Vert \nabla_{\!\tau\,} w \Vert_{L^2(\Gamma_{\! i})}$ holds because $\nabla_{\!\tau\,} w$ vanishes almost everywhere in~$\Gamma_i \setminus \tilde \Gamma_i$ (see~\cite[Lemma~7.7]{GT}). Since $\nabla_{\! \tau \,} w = 0$ on~$\Gamma_i$ for $i > N_1$, inequality (\ref{C}) implies
\begin{equation}\label{CN1}
\int_{\tilde \Gamma}
|\nabla_{\! \tau \,} w|^2 \, ds
\le
C
\int_{\tilde \Gamma}
|\nabla_{\! \tau \,} w| \, w \, ds
.
\end{equation}
In order to estimate the last integral, using the Cauchy--Schwarz inequality in~$\tilde \Gamma$ we get
$$
\int_{\tilde \Gamma} |\nabla_{\! \tau \,} w| \, w \, ds
\le
\Vert \nabla_{\! \tau \,} w
\Vert_{L^2(\tilde \Gamma)} \, \Vert w \Vert_{L^2(\tilde \Gamma)} .
$$%
By~\eqref{Gamma_1}, $\nabla_{\! \tau \,} w$ cannot vanish identically in~$\tilde \Gamma_1$, hence $\Vert \nabla_{\! \tau \,} w \Vert_{L^2(\tilde \Gamma)} > 0$. Thus, the inequality above and~(\ref{CN1}) imply
\begin{equation}\label{CS}
\Vert \nabla_{\! \tau \,} w \Vert_{L^2(\tilde \Gamma)}
\le
C \, \Vert w \Vert_{L^2(\tilde \Gamma)}
.
\end{equation}
Let us estimate the last term. For every  $i = 1, \dots, N_1$ we have $k > \mu_0^+ \ge \min\limits_{\Gamma_{\! i}} v$, and consequently $\min\limits_{\Gamma_{\! i}} w = 0$. Hence by relation \eqref{PoincPesataLemmaBISAntonio} we get $\Vert w \Vert^2_{L^2(\tilde \Gamma_{\! i})} \le |\tilde \Gamma_i|^2 \, \Vert \nabla_{\! \tau \,} w \Vert^2_{L^2(\tilde \Gamma_i)}$, and by summation over~$i = 1, \dots, N_1$
$$
\Vert w \Vert_{L^2(\tilde \Gamma)}
\le
|\tilde \Gamma|
\,
\Vert \nabla_{\! \tau \,} w \Vert_{L^2(\tilde \Gamma)}
.
$$%
This and~(\ref{CS}) imply $C \, |\tilde \Gamma| \ge 1$. By contrast,
we now check that $|\tilde \Gamma| \searrow 0$ as $k \nearrow \mu_1$.
To see this, it suffices to observe that when $k$ increases, the
corresponding set $\tilde \Gamma = \tilde \Gamma(k)$ describes a
decreasing family of open subsets of~$\Gamma_{\! \nu}$ with finite
Hausdorff measure, and by the continuity of the measure we have
\begin{align*}
\lim_{k \nearrow \mu_1} |\tilde \Gamma(k)|
=
\Bigg|
\bigcap_{k \in (\mu_0^+ \! , \, \mu_1)}
\tilde \Gamma(k)
\,\Bigg|
=
|\emptyset|
=
0
,
\end{align*}
which is a contradiction. Thereafter, the unique function~$v \in V_0$
satisfying~(\ref{zero}) for every $w \in V_0$ is the null function.
\end{proof}
\subsection{Existence of a solution}
Let us now turn our attention to the question concerning the existence of solutions to the homogeneous problem \eqref{homogeneous}. To this aim, we have to recall  that the composite Sobolev space $V_0$ introduced in \eqref{DefiSpaceFunctionV} was endowed with the norm  \eqref{NormOnV0}. We first start with this 
\begin{lemma}\label{LemmaBilinearFormAdHoc}
Let Assumption \ref{MainAssumption} be complied. Suppose that  $0<a_2\in W^{1,\infty}(\Gamma_{\! \nu})$, $a_0\in L^\infty(\Gamma_{\! \nu})$ and for\/ $V_0$ as in \eqref{DefiSpaceFunctionV} let us consider the bilinear form  ${\mathfrak B} \colon V_0
\times V_0 \to \mathbb R$ given by
$$
{\mathfrak B}(v,w)
=
\int_\Omega
Dv \cdot Dw \, d {\bf x}
+
\int_{\Gamma_{\! \nu}}
a_2 \, \nabla_{\!\tau\,} v \, \nabla_{\!\tau\,} w \, ds
+
\int_{\Gamma_{\! \nu}}
(\nabla_{\!\tau\,} a_2 \, \nabla_{\!\tau\,} v) w \, ds
+
\int_{\Gamma_{\! \nu}}
a_0 \, v w\, ds.
$$%
Then, for $\lambda_0,\lambda_2$ and $M$ defined in \eqref{lambda-M} the bilinear form 
$$
{\mathfrak B}_{\sigma_0}(v,w)
:=
\begin{cases}
{\mathfrak B}(v,w) + \sigma_0 \int_{\Gamma_{\! \nu}} vw \, ds & \textrm{ if } \sigma_0 >0, \\
{\mathfrak B}(v,w)  & \textrm{ if } \sigma_0 \leq 0,
\end{cases}\quad \text{where} \quad \sigma_0=\frac{\, M^2 \,}{2\lambda_2}-\lambda_0,
$$%
is continuous and coercive.
\begin{proof} Part 1. Continuity. From the definition of ${\mathfrak B}$, we directly have by means of the the Cauchy--Schwarz inequality and \eqref{lambda-M}
\begin{align}\label{FirstStepContinuityL}
\nonumber
|{\mathfrak B}(v,w)|&\leq \int_\Omega
|Dv \cdot Dw| \, d {\bf x}
+\Lambda_2 
\int_{\Gamma_{\! \nu}}
 |\nabla_\tau v || \nabla_\tau w|\, ds
\\
&\quad+M
\nonumber
\int_{\Gamma_{\! \nu}}
|\nabla_{\!\tau\,} v| \, |w| \, ds +\Lambda_0
\int_{\Gamma_{\! \nu}}
|v| \, |w|\, ds
\\
\noalign{\medskip}
\nonumber
&\leq \lVert Dv \rVert_{L^2(\Omega)} \, \lVert Dw \rVert_{L^2(\Omega)}  +\Lambda_2 \, \lVert \nabla_{\!\tau\,} v \rVert_{L^2(\Gamma_{\!\nu})} \, \lVert \nabla_{\!\tau\,} w \rVert_{L^2(\Gamma_{\! \nu})}\\
\noalign{\bigskip}
&\quad +M \, \lVert \nabla_{\!\tau\,} v \rVert_{L^2(\Gamma_{\! \nu})} \, \lVert  w \rVert_{L^2(\Gamma_{\! \nu})}
+\Lambda_0 \, \lVert  v \rVert_{L^2(\Gamma_{\! \nu})} \, \lVert  w \rVert_{L^2(\Gamma_{\! \nu})}.
\end{align}
On the other hand, applications of Lemma \ref{PoincPesataLemma} lead to $\Vert v \Vert_{L^2(\Gamma_{\! \nu})} \le L \, \lVert D v\rVert_{L^2(\Omega)}$ and $\Vert w \Vert_{L^2(\Gamma_{\! \nu})} \le L \, \lVert D w\rVert_{L^2(\Omega)}$. These inequalities in conjunction with \eqref{FirstStepContinuityL} yield for some positive $k$
\begin{equation*}
|{\mathfrak B}(v,w)|\leq k \, \lVert v \rVert_{V_0} \, \lVert w \rVert_{V_0}.
\end{equation*}
Further,  for all $v,w \in V_0$ we similarly have
\begin{equation*}
\int_{\Gamma_\nu}vwds\leq \Vert v \Vert_{L^2(\Gamma_{\! \nu})} \Vert w \Vert_{L^2(\Gamma_{\! \nu})}.
\end{equation*}
As an immediate consequence of this, by invoking again Lemma \ref{PoincPesataLemma}, as well as recalling definition \eqref{NormOnV0}, it holds that
\begin{equation*}
|{\mathfrak B}_{\sigma_0}(v,w)|\leq |{\mathfrak B}(v,w)|+\Big|\sigma_0\int_{\Gamma_\nu}vwds\Big|\leq L_1 \, \lVert v \rVert_{V_0} \, \lVert w \rVert_{V_0} \quad \textrm{for all } v,w \in V_0,
\end{equation*}
where $L_1$ is some positive constant. Hence ${\mathfrak B}_{\sigma_0}$ is continuous.

\goodbreak Part 2. Coercivity. Again in light of positions \eqref{lambda-M} we directly have
\begin{align}\label{InequalityForCoherci}
\nonumber
&\int_{\Gamma_{\! \nu}}
a_2 \, |\nabla_{\!\tau\,} v|^2\, ds
+
\int_{\Gamma_{\! \nu}}
(\nabla_{\!\tau\,} a_2 \, \nabla_{\!\tau\,} v) \, v \, ds+
\int_{\Gamma_{\! \nu}}
a_0 \, v^2 \, ds
\\
\noalign{\medskip}
&\ge
\lambda_2
\int_{\Gamma_{\! \nu}}
|\nabla_\tau v|^2 \, ds
-
M
\int_{\Gamma_{\! \nu}}
|\nabla_{\!\tau\,} v| \, |v| \, ds+\lambda_0\int_{\Gamma_{\! \nu}}
 v^2 \, ds
.
\end{align}
Now if $M = 0$ then $\sigma_0 \le 0$ and therefore
$$
{\mathfrak B}_{\sigma_0}(v,v)
=
{\mathfrak B}(v,v)
\ge
\int_\Omega |Dv|^2 \, d {\bf x}
+
\lambda_2
\int_{\Gamma_{\! \nu}}
|\nabla_{\!\tau\,} v|^2 \, ds
\geq L_2 \, \lVert v \rVert^2_{V_0}
$$%
where $L_2 = \min\{\, 1,\lambda_2 \,\} > 0$, hence ${\mathfrak B}_{\sigma_0}$ is coercive. If, instead, $M > 0$, then we let $\varepsilon = \lambda_2/(2M)$ in the Young inequality and obtain $|\nabla_{\!\tau\,} v| \, | v| \le \varepsilon \, |\nabla_{\!\tau\,} v|^2 + \frac1{4\varepsilon} \, v^2$. By plugging such an estimate into \eqref{InequalityForCoherci}, we arrive at
$$
{\mathfrak B}(v,v)
\ge
\int_\Omega |Dv|^2 \, d {\bf x}
+
\frac{\, \lambda_2 \,}2
\int_{\Gamma_{\! \nu}}
|\nabla_{\!\tau\,} v|^2 \, ds
-
\Big(\frac{\, M^2 \,}{2\lambda_2}-\lambda_0\Big)
\int_{\Gamma_{\! \nu}}
v^2 \, ds
$$%
and for $L_3=\min\{\, 1,\frac{\lambda_2}2 \,\} > 0$ we may write
$$
{\mathfrak B}_{\sigma_0}(v,v)
\ge
\int_\Omega |Dv|^2 \, d {\bf x}
+
\frac{\, \lambda_2 \,}2
\int_{\Gamma_{\! \nu}}
|\nabla_{\!\tau\,} v|^2 \, ds
\geq L_3 \, \lVert v \rVert^2_{V_0}
.
$$%
Hence ${\mathfrak B}_{\sigma_0}$ is coercive also in this case, and the proof is complete.
\end{proof}
\end{lemma}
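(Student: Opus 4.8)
The plan is to treat the two assertions separately, handling continuity first and then coercivity, since the latter carries the genuine difficulty. For continuity I would estimate each of the four integrals defining ${\mathfrak B}(v,w)$ by the Cauchy--Schwarz inequality, using the bounds $a_2 \le \Lambda_2$, $|\nabla_{\!\tau\,} a_2| \le M$ and $a_0 \le \Lambda_0$ recorded in~\eqref{lambda-M}. This immediately controls the first two integrals by $\|Dv\|_{L^2(\Omega)}\|Dw\|_{L^2(\Omega)}$ and $\Lambda_2\|\nabla_{\!\tau\,} v\|_{L^2(\Gamma_\nu)}\|\nabla_{\!\tau\,} w\|_{L^2(\Gamma_\nu)}$, both of which are dominated by $\|v\|_{V_0}\|w\|_{V_0}$ in view of~\eqref{NormOnV0}. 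The remaining two integrals, however, involve the factors $\|w\|_{L^2(\Gamma_\nu)}$ and $\|v\|_{L^2(\Gamma_\nu)}$, which are not directly part of the $V_0$-norm; here I would invoke the Poincar\'e inequality of Lemma~\ref{PoincPesataLemma} to write $\|v\|_{L^2(\Gamma_\nu)}\le L\,\|Dv\|_{L^2(\Omega)}\le L\,\|v\|_{V_0}$, and likewise for $w$, thereby absorbing these terms as well. The same device applied to the extra summand $\sigma_0\int_{\Gamma_\nu}vw\,ds$ gives continuity of ${\mathfrak B}_{\sigma_0}$.

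For coercivity I would compute ${\mathfrak B}(v,v)$ and bound the boundary integrals from below using $\lambda_2,\lambda_0,M$, obtaining
$$
{\mathfrak B}(v,v)\ge\int_\Omega|Dv|^2\,d{\bf x}+\lambda_2\int_{\Gamma_\nu}|\nabla_{\!\tau\,} v|^2\,ds-M\int_{\Gamma_\nu}|\nabla_{\!\tau\,} v|\,|v|\,ds+\lambda_0\int_{\Gamma_\nu}v^2\,ds.
$$
The decisive obstacle is the cross term $-M\int_{\Gamma_\nu}|\nabla_{\!\tau\,} v|\,|v|\,ds$, whose sign is indefinite: it is precisely the contribution of the variable coefficient (it vanishes when $a_2$ is constant, since then $M=0$), and it obstructs coercivity of ${\mathfrak B}$ itself. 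The remedy is Young's inequality with the tuned parameter $\varepsilon=\lambda_2/(2M)$ in the case $M>0$, which splits the cross term into $-\tfrac{\lambda_2}2|\nabla_{\!\tau\,} v|^2-\tfrac{M^2}{2\lambda_2}v^2$; the first half is absorbed by the $\lambda_2$-term, halving it, while the second half combines with $\lambda_0\int_{\Gamma_\nu}v^2\,ds$ to leave exactly $-\sigma_0\int_{\Gamma_\nu}v^2\,ds$, with $\sigma_0=\tfrac{M^2}{2\lambda_2}-\lambda_0$.

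It then remains to dispose of this residual term $-\sigma_0\int_{\Gamma_\nu}v^2\,ds$, and here the definition of $\sigma_0$ does the work. If $\sigma_0\le0$ (a case that subsumes the degenerate $M=0$, for which the cross term is absent altogether) the residual term is nonnegative and may simply be discarded; if $\sigma_0>0$ the shifted form ${\mathfrak B}_{\sigma_0}$ adds back $\sigma_0\int_{\Gamma_\nu}v^2\,ds$, cancelling it exactly. In either case one arrives at
$$
{\mathfrak B}_{\sigma_0}(v,v)\ge\int_\Omega|Dv|^2\,d{\bf x}+\frac{\lambda_2}2\int_{\Gamma_\nu}|\nabla_{\!\tau\,} v|^2\,ds\ge L_3\,\|v\|_{V_0}^2,
$$
with $L_3=\min\{1,\lambda_2/2\}>0$, which is the desired coercivity. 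I expect the only delicate point to be the tuning of $\varepsilon$ together with the bookkeeping that makes the coefficient in front of $\int_{\Gamma_\nu}v^2\,ds$ coincide with $\sigma_0$; everything else is routine. The reason coercivity is secured at the price of a spectral shift, rather than for ${\mathfrak B}$ directly, is that this shift is later compensated through a compact resolvent and the Fredholm alternative, as announced in the Introduction.
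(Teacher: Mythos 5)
Your proposal is correct and follows essentially the same route as the paper's proof: Cauchy--Schwarz estimates combined with Lemma~\ref{PoincPesataLemma} for continuity, and Young's inequality with the tuned parameter $\varepsilon=\lambda_2/(2M)$ for coercivity, with the shift $\sigma_0$ exactly compensating (or being discarded against) the residual term $\int_{\Gamma_\nu}v^2\,ds$. The only cosmetic difference is that you organize the coercivity cases by the sign of $\sigma_0$ (noting $M=0$ is subsumed) whereas the paper splits by $M=0$ versus $M>0$; the two bookkeepings are equivalent.
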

\begin{lemma}[Unique solvability of the homogeneous problem]\label{LemmaExistence}
Let Assumption~\ref{MainAssumption} be complied. Suppose that\/ $0<a_2 \in W^{1,\infty}(\Gamma_{\! \nu})$ and $0\leq a_0 \in L^\infty(\Gamma_{\! \nu})$. Then problem \eqref{homogeneous} admits a unique weak solution.
\begin{proof}
We rely on the properties of the bilinear form ${\mathfrak B}_{\sigma_0}(v,w)$ introduced in Lemma \ref{LemmaBilinearFormAdHoc}. For $\sigma_0 \le 0$, equation~(\ref{weak}) may be rewritten as ${\mathfrak B}_{\sigma_0}(v,w) = {\cal L}w$, and existence and uniqueness of the solution~$v$ follow immediately from the Lax-Milgram theorem. If, instead, $\sigma_0 > 0$, then we invoke the Fredholm alternative \cite[Theorem~5.3]{GT} (see also \cite[Section~D.5]{Evans-2010-PDEs}). To be precise, solving equation~(\ref{weak}) is equivalent to finding $v \in V_0$ such that
\begin{equation}\label{LaxMilgramEquationModified}
{\mathfrak B}_{\sigma_0}(v,w)
=
{\cal L} w
+
\sigma_0 \int_{\Gamma_{\! \nu}} vw \, ds \quad \forall \, w \in V_0.
\end{equation}
By the Lax-Milgram theorem we may define the linear, continuous operator $T \colon V_0^* \allowbreak \to V_0$ given by $T(L) \allowbreak = v_L$, where $v_L$ is the unique solution of the equation ${\mathfrak B}_{\sigma_0}(v,w) = Lw$ in the unknown~$v$. For every $v \in V_0$ we also define the linear, continuous operator $I_v \in V_0^*$ given by
$$
I_{v \,} w = \int_{\Gamma_{\! \nu}} vw \, ds
.
$$%
Consequently, relation \eqref{LaxMilgramEquationModified} can be rewritten as
$$
v = T({\cal L}) + \sigma_0 \, T(I_v)
.
$$%
Existence and uniqueness of a solution~$v$ follow from the Fredholm alternative provided we ensure that
\begin{enumerate}
\renewcommand\labelenumi{$(\roman{enumi})$}
\item the homogeneous equation $v = \sigma_0 \, T(I_v)$ has
only the trivial solution;
\item the operator $K \colon v \mapsto T(I_v)$ is compact. 
\end{enumerate}
Condition $(i)$ was already established in Lemma \ref{LemmaUniqueness}. To check~$(ii)$, take a bounded sequence $(v_n)$ in~$V_0$. By Lemma~\ref{PoincPesataLemma}, the traces $v_{n|_{\Gamma_{\! \nu}}}$ are uniformly bounded in $H^1_0(\Gamma_{\! \nu})$, hence there exists a subsequence of functions still denoted by $v_n$ whose traces on~$\Gamma_{\! \nu}$ converge to some~$v_\infty$ in~$L^2(\Gamma_{\! \nu})$. Henceforth, the operators $I_{v_n}$ converge to $I_{v_\infty}$ in the norm of the dual space~$V_0^*$. Since the operator~$T$ is continuous, the sequence of functions $K(v_n) = T(I_{v_n})$ converges in~$V_0$. In conclusion, $K$~is a compact operator, and the lemma follows from the Fredholm theorem.
\end{proof}
\end{lemma}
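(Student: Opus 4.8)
The plan is to cast the weak formulation of problem~\eqref{homogeneous} as the variational equation $\mathfrak{B}(v,w) = F(w)$ for all $w \in V_0$, where $\mathfrak{B}$ is the bilinear form of Lemma~\ref{LemmaBilinearFormAdHoc} and $F = f_{\tilde\varphi} + g_{\tilde\varphi} \in V_0^*$ is the continuous linear functional carrying the data. Since that lemma only guarantees coercivity of the shifted form $\mathfrak{B}_{\sigma_0}$, I would split the argument according to the sign of $\sigma_0 = M^2/(2\lambda_2) - \lambda_0$. When $\sigma_0 \le 0$, one has $\mathfrak{B}_{\sigma_0} = \mathfrak{B}$, so $\mathfrak{B}$ is itself continuous and coercive, and existence and uniqueness of $v$ follow at once from the Lax--Milgram theorem.

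The substantive case is $\sigma_0 > 0$, in which $\mathfrak{B}$ need not be coercive. Here I would add $\sigma_0 \int_{\Gamma_\nu} vw\, ds$ to both sides and rewrite the equation as $\mathfrak{B}_{\sigma_0}(v,w) = F(w) + \sigma_0 \int_{\Gamma_\nu} vw\, ds$. Applying Lax--Milgram to the now-coercive form $\mathfrak{B}_{\sigma_0}$ yields a bounded solution operator $T\colon V_0^* \to V_0$, and, writing $I_v \in V_0^*$ for the functional $w \mapsto \int_{\Gamma_\nu} vw\, ds$, the problem becomes the operator equation $v - \sigma_0\, T(I_v) = T(F)$, that is $(\mathrm{Id} - \sigma_0 K)v = T(F)$ with $K = T \circ I$. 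I would then close the argument via the Fredholm alternative, which requires two ingredients: (i) injectivity of $\mathrm{Id} - \sigma_0 K$, equivalently that $v = \sigma_0 T(I_v)$ forces $v = 0$ --- this is exactly the content of the uniqueness Lemma~\ref{LemmaUniqueness}; and (ii) compactness of $K$.

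The main obstacle is precisely establishing the compactness in (ii), which is where the geometry of the composite space $V_0$ enters. Given a bounded sequence $(v_n)$ in $V_0$, the Poincar\'e inequality of Lemma~\ref{PoincPesataLemma}, together with the definition~\eqref{NormOnV0} of the $V_0$-norm, bounds the traces $v_n|_{\Gamma_\nu}$ uniformly in $H^1_0(\Gamma_\nu)$. Invoking the compact embedding $H^1_0(\Gamma_\nu) \hookrightarrow L^2(\Gamma_\nu)$ on the one-dimensional curves, I would extract a subsequence whose traces converge in $L^2(\Gamma_\nu)$ to some $v_\infty$. Since $|I_{v_n}w - I_{v_m}w| \le \|v_n - v_m\|_{L^2(\Gamma_\nu)}\,\|w\|_{L^2(\Gamma_\nu)}$ and $\|w\|_{L^2(\Gamma_\nu)} \le L\,\|Dw\|_{L^2(\Omega)}$ again by Lemma~\ref{PoincPesataLemma}, this $L^2$-convergence upgrades to convergence of $I_{v_n}$ in the operator norm of $V_0^*$; continuity of $T$ then forces $K(v_n) = T(I_{v_n})$ to converge in $V_0$, proving that $K$ is compact.

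The delicate point throughout is recognizing that the obstruction to coercivity of $\mathfrak{B}$ is a genuinely compact, lower-order perturbation supported on the boundary: the bilinear pairing $(v,w) \mapsto \int_{\Gamma_\nu} vw\, ds$ factors through the trace map into $L^2(\Gamma_\nu)$, which is compact on $V_0$ by the one-dimensional Rellich embedding. This is what guarantees that passing from the Lax--Milgram framework to the Fredholm alternative loses no information, so that uniqueness (Lemma~\ref{LemmaUniqueness}) automatically upgrades to existence. With (i) and (ii) in hand, the Fredholm theorem delivers the unique solvability of~\eqref{homogeneous} in both regimes of $\sigma_0$.
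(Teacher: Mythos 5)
Your proposal is correct and follows essentially the same route as the paper: splitting on the sign of $\sigma_0$, applying Lax--Milgram directly when $\sigma_0 \le 0$, and otherwise converting the problem to the fixed-point equation $v = T(F) + \sigma_0 T(I_v)$ and invoking the Fredholm alternative, with injectivity supplied by Lemma~\ref{LemmaUniqueness} and compactness of $K = T \circ I$ obtained exactly as in the paper, via the trace bound of Lemma~\ref{PoincPesataLemma} and the one-dimensional compact embedding $H^1_0(\Gamma_{\!\nu}) \hookrightarrow L^2(\Gamma_{\!\nu})$. The only (harmless) difference is notational: you write the data functional as $F = f_{\tilde\varphi} + g_{\tilde\varphi}$, which is in fact slightly more precise than the paper's use of ${\cal L}$ for the homogeneous problem.
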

As a consequence of all the above preparations, we finally can prove our main statement:

\begin{proof}[Proof of Theorem \ref{Maintheorem}]
The assumption on $\varphi$ implies by means of Lemma \ref{lifting} the existence of a lifting~$\tilde \varphi \in V$. Hence, the considerations presented in Remark \ref{RemarkFromInhToHom} convert problem \eqref{inhomogeneous} into a homogeneous problem having the form~\eqref{homogeneous}. Finally, Lemma~\ref{LemmaExistence} ensures its unique solvability.
\end{proof}
\appendix
\section{An application to the equilibrium problem for a prestressed membrane}\label{AppendixSection}
As announced in $\S$\ref{SectionIntro}, let us dedicate to the interplay between {\bf Equilibrium Problem 1} in \cite[Section~3.1]{ViglialoroGonzalezMurciaIJCM} and problem~\eqref{inhomogeneous}, exactly by presenting the formulation of the former in terms of the nomenclature employed in the present paper. We consider a plane domain~$\Omega$ satisfying Assumption~\ref{MainAssumption}, and we restrict to the case when $\Gamma_{\!\nu}$ is a curve parametrized by a vector-valued function $r(t) = (x(t),y(t))$ belonging to $C^2((t_0, \allowbreak t_1))\cap C^1([t_0,t_1])$ and satisfying $r'(t) \ne 0$ for all~$t$ in some bounded interval $[t_0,t_1]$ on the real line, with $t_0 < t_1$.

\paragraph{Expression of the Laplace-Beltrami operator.}
Let us consider a function $u \allowbreak \in C^2((\Omega \cup \Gamma_{\!\nu})$, and recall that the arc length along the curve~$\Gamma_{\!\nu}$ is given by
\begin{equation*}
s(t)
=
\int_{t_0}^t \sqrt{(x'(\xi))^2 + (y'(\xi))^2 \,} \, d\xi
.
\end{equation*}
Since $r'(t) \ne 0$, the function $s(t)$ admits a smooth inverse $t = t(s)$, so yielding (recall Remark \ref{RemarkLaplOperOnaCurve}) the following representation of the Laplace--Beltrami operator of $u$ along the one-dimensional manifold~$\Gamma_{\!\nu}$:
\begin{equation}\label{LaplBelOperator-u_ss}
\Delta_{\tau\,} u=u_{ss}
=
\frac{\, d^2 \,}{\, ds^2 \,}
\,
u(x(t(s)), \, y(t(s)))
.
\end{equation}
It is essential for our purposes to express $\Delta_{\tau\,} u$ in terms of the partial derivatives of~$u$ with respect to the Cartesian coordinates $x,y$, the outward derivative $u_\nu$, and the curvature $\kappa$ of the curve $\Gamma_{\!\nu}$, given by
\begin{equation}\label{kappa}
\kappa(t)
=
\frac{x'(t) \, y''(t)-y'(t) \, x''(t)}
{\, \big({(x'(t))^2 + (y'(t))^2}\big)^{3/2} \,}, \quad t \in (t_0,t_1).
\end{equation}
To this purpose, we will make use of this obvious identity
\begin{equation}\label{obviousBIS}
\frac{d}{\, dt \,}
=
\sqrt{(x'(t))^2 + (y'(t))^2 \,} \, \frac{d}{\, ds \,}
.
\end{equation}
Furthermore, for every point  $(x,y) = (x(t), y(t)) \in \Gamma_{\!\nu}$, and by introducing the tangent unit vector to $\Gamma_{\!\nu}$
\begin{equation}\label{tangetUnitvector}
\tau:=\tau(t)
=
\frac{\, (x'(t),y'(t)) \,}
{\, \sqrt{(x'(t))^2 + (y'(t))^2 \,} \,},\quad t \in (t_0,t_1),
\end{equation}
we let
\begin{align}
\nonumber
u_\tau
&=
\tau \cdot Du(x(t),y(t))
\\
\noalign{\bigskip}
\label{u_tau}
&=
\frac{\,
	x'(t) \, u_x(x(t),y(t)) + y'(t) \, u_y(x(t),y(t))
	\,}
{\sqrt{(x'(t))^2 + (y'(t))^2 \,}},\quad t \in (t_0,t_1),
\\
\noalign{\bigskip}
\nonumber
u_{\tau\tau}
&=
\tau \cdot D^2u(x(t),y(t)) \, \tau^T
\\
\noalign{\bigskip}
\nonumber
&=
\frac{\, (x'(t))^2 \, u_{xx}(x(t),y(t))
+
2 \, x'(t) \, y'(t) \, u_{xy}(x(t),y(t)) \,}
{(x'(t))^2 + (y'(t))^2}
\\
\noalign{\bigskip}
\label{u_tautau} 
&
\quad + \frac{\, (y'(t))^2 \, u_{yy}(x(t),y(t)) \,}
{(x'(t))^2 + (y'(t))^2},\quad t \in (t_0,t_1),
\end{align}
where $Du(x,y) = (u_x(x,y), \, u_y(x,y))$ is the gradient of~$u$, $D^2u(x,y)$ the Hessian matrix and $\tau^T$ the transposed of~$\tau$.  The following lemma shows the relation between  $u_{ss}$, $u_{\tau\tau}$ and the outward derivative of~$u$ on $\Gamma_{\nu}$, explicitly given by
\begin{equation}\label{u_nu}
u_\nu
=
\frac{\, x'(t) \, u_y(x(t),y(t)) - y'(t) \, u_x(x(t),y(t)) \,}
{\, \sqrt{(x'(t))^2 + (y'(t))^2 \,} \,}, \quad t \in (t_0,t_1).
\end{equation}
\begin{lemma}\label{equivalence}
	If $u \in C^2(\Omega \cup \Gamma_{\nu})$, then
	$u_{ss} = u_{\tau\tau} + \kappa(t) \, u_\nu$ on~$\Gamma_{\nu}$.
\begin{proof}
By computing the derivative in the right-hand side of $u_s = \frac{\, d \,}{\, ds \,} \, u(x(t(s)), \allowbreak\, y(t(s)))$, and taking~(\ref{obviousBIS}) into account, we find for all $t \in (t_0,t_1)$
\begin{equation}\label{u_s}
\sqrt{(x'(t))^2 + (y'(t))^2 \,} \, u_s = x'(t) \, u_x(x(t),y(t)) + y'(t) \, u_y(x(t),y(t))
,
\end{equation}
which in particular shows due to \eqref{u_tau} that $u_s = u_\tau$. Differentiating both sides of~(\ref{u_s}) with respect to~$t$ yields
\begin{align*}
&\frac{\, x'(t) \, x''(t) + y'(t) \, y''(t)}{\, \sqrt{(x'(t))^2 + (y'(t))^2 \,} \,}
\,
u_s
+
\big((x'(t))^2 + (y'(t))^2\big)
\,
u_{ss}
\\
\noalign{\medskip}
&=
(x'(t))^2 \, u_{xx}(x(t),y(t)) + 2 \, x'(t) \, y'(t) \, u_{xy}(x(t),y(t)) + (y'(t))^2 \, u_{yy}(x(t),y(t))
\\
\noalign{\bigskip}
&
\quad +x''(t) \, u_x(x(t),y(t))
+ y''(t) \, u_y(x(t),y(t)), \quad t \in (t_0,t_1).
\end{align*}
By dividing both sides by $(x'(t))^2 + (y'(t))^2$ and using relation~\eqref{u_tautau}, the last equality becomes, on $(t_0,t_1)$,
$$
\frac{\, x'(t) \, x''(t) + y'(t) \, y''(t) \,}
{((x'(t))^2 + (y'(t))^2)^\frac{3}{2}} \, u_s
+
u_{ss}
=
u_{\tau\tau}
+
\frac{\, x''(t) \, u_x(x(t),y(t))
+ y''(t) \, u_y(x(t),y(t)) \,}{(x'(t))^2 + (y'(t))^2}.
$$%
Finally, taking \eqref{kappa} and~(\ref{u_s}) into consideration, we arrive at
$$
u_{ss}
=
u_{\tau\tau}
+
\kappa(t)
\,
\frac{\, x'(t) \, u_y(x(t),y(t)) - y'(t) \, u_x(x(t),y(t)) \,}
{\, \sqrt{(x'(t))^2 + (y'(t))^2 \,} \,},\quad t \in (t_0,t_1),
$$%
and the lemma follows thanks to (\ref{u_nu}).
\end{proof}
\end{lemma}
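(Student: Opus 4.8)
The plan is to reduce everything to a direct computation in the Cartesian derivatives of~$u$ and the derivatives of the parametrization $r(t) = (x(t), y(t))$, and then to recognize the curvature and the normal derivative as algebraic factors emerging at the very end. Writing $E := (x')^2 + (y')^2$ for brevity, I would first establish that the intrinsic first derivative coincides with the tangential one, i.e.\ $u_s = u_\tau$. Indeed, applying the chain rule to $u_s = \frac{d}{ds} \, u(x(t(s)), y(t(s)))$ and invoking the identity~\eqref{obviousBIS} yields
\[
\sqrt{E} \, u_s = x' \, u_x + y' \, u_y ,
\]
and a comparison with the definition~\eqref{u_tau} gives $u_s = u_\tau$.

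Next I would differentiate the displayed relation with respect to~$t$. On the left-hand side the product rule produces a term proportional to~$u_s$ together with $\sqrt{E} \, \frac{d u_s}{dt}$; a second use of~\eqref{obviousBIS} rewrites $\frac{d u_s}{dt}$ as $\sqrt{E} \, u_{ss}$, so that the second intrinsic derivative surfaces. On the right-hand side the chain rule applied to $u_x$ and~$u_y$ generates the second-order Cartesian derivatives $u_{xx}, u_{xy}, u_{yy}$ along with two first-order terms carrying $x''$ and~$y''$. Dividing through by~$E$ and recognizing the purely second-order block as $u_{\tau\tau}$ by means of~\eqref{u_tautau}, I am left with the identity
\[
u_{ss} = u_{\tau\tau} + \frac{x'' \, u_x + y'' \, u_y}{E} - \frac{x' \, x'' + y' \, y''}{E^{3/2}} \, u_s .
\]

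The crux, which I expect to be the main obstacle, is to show that the two remaining first-order terms collapse exactly into $\kappa(t) \, u_\nu$. Substituting $u_s = u_\tau = (x' u_x + y' u_y)/\sqrt{E}$ and placing both terms over the common denominator~$E^2$, I would verify by a careful expansion that the numerator factors as $(x' y'' - y' x'')(x' u_y - y' u_x)$; the cancellations among the $u_x$- and $u_y$-coefficients are not transparent at a glance and demand attentive bookkeeping. Finally, reading off $x' y'' - y' x'' = E^{3/2} \, \kappa$ from~\eqref{kappa} and $x' u_y - y' u_x = \sqrt{E} \, u_\nu$ from~\eqref{u_nu}, this leftover expression equals precisely $\kappa(t) \, u_\nu$, which establishes the claimed identity $u_{ss} = u_{\tau\tau} + \kappa(t) \, u_\nu$ on~$\Gamma_\nu$.
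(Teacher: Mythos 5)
Your proposal is correct and follows essentially the same route as the paper: establish $\sqrt{E}\,u_s = x'u_x + y'u_y$ (hence $u_s = u_\tau$) via the chain rule and~\eqref{obviousBIS}, differentiate in~$t$, divide by~$E$ to expose $u_{\tau\tau}$, and then absorb the leftover first-order terms into $\kappa(t)\,u_\nu$ using \eqref{kappa} and~\eqref{u_nu}. The only difference is presentational: you spell out the factorization of the residual numerator as $(x'y''-y'x'')(x'u_y-y'u_x)$, a step the paper compresses into its final line, and your expansion checks out.
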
 

\paragraph{Setting the domain.}
In view of the example below we assume strict convexity of the curve~$\Gamma_{\!\nu}$, in the sense that $\kappa(t) > 0$ for all $t \in (t_0,t_1)$. We also suppose
$$
x(t_0)<0,\quad x(t_1)>0, \quad \textrm{and} \quad \textrm{$x'(t),y(t)>0$ for all $t \in (t_0,t_1)$.}
$$
By setting 
\begin{equation}\label{boundary}
\begin{cases}
\Gamma_{\nu} = \{\, (x(t),y(t)) \in \mathbb{R}^2 \mid t \in (t_0,t_1) \,\},\\
\noalign{\medskip}
\Gamma_{\!D}
=
\{\, (x(t_0),t)  \in \mathbb{R}^2 \mid 0 \le t \le y(t_0) \,\}
\\
\qquad \quad \cup
\{\, (t,0))  \in \mathbb{R}^2 \mid t \in (x(t_0),x(t_1)) \,\}
\\
\qquad \quad \cup
\{\, (x(t_1),t)  \in \mathbb{R}^2 \mid 0 \le t \le y(t_1) \,\},
\end{cases}
\end{equation} 
we consider the planar, non-empty, open and bounded domain $\Omega$, uniquely determined by the condition $\partial \Omega = \Gamma_{\nu} \cup \Gamma_{\!D}$. In these circumstances, the equilibrium of a prestressed membrane obeys an elliptic equation in $\Omega$ for the unknown $u \colon \Omega \to \mathbb{R}$ (whose graph represents the shape of the membrane) endowed with a classical Dirichlet condition on $\Gamma_{\!D}$ (rigid boundaries, with their own stiffness), which basically fixes the shape of the membrane on the portion $\Gamma_{\!D}$, and a Ventcel-type one on $\Gamma_{\nu}$ (non-rigid boundaries, {\textit{without any stiffness}}), idealizing the physical equilibrium for cable elements. 
\paragraph{The mixed boundary value problem.}
As discussed in \cite[Section 2.2.2]{ViglialoroGonzalezMurciaIJCM}, 
the introduction of a cable boundary as a structural element providing equilibrium to a membrane requires a restriction on its shape, which is modeled by the convexity assumption $\kappa(t) > 0$, and the so-called \textit{cable-membrane compatibility equation} $u_{\tau\tau} = 0$ along $\Gamma_\nu$ (see \cite[relation (5)]{ViglialoroGonzalezMurciaIJCM} for a particular parametrization of $\Gamma_\nu$). Recalling Lemma~\ref{equivalence} and relation~\eqref{LaplBelOperator-u_ss}, we have
$$
u_{\tau \tau}=0 \quad \Leftrightarrow \quad u_{\nu}-\frac{1}{\, \kappa(x) \,} \, \Delta_{\tau\,} u=0.
$$
Now consider the most simplified case of {\bf Equilibrium Problem 1} in \cite[Section~3.1]{ViglialoroGonzalezMurciaIJCM}, which is obtained when $\mbox{\boldmath$\sigma$} =c {{\bf 1}}$. The problem corresponds to the equilibrium of a membrane tensioned with the same stress $c>0$ in the two main orthogonal directions and not exposed to external loads. The comments above make such a problem read as follows: fixed a sufficiently regular function $\varphi \colon \Gamma_{\!D} \to \mathbb R$, find $u$ such that 
\begin{equation*}
\begin{cases}
\Delta u = 0 &\mbox{in $\Omega$;}
\\
u = \varphi &\mbox{on $\Gamma_{\!D}$;}
\\
u_\nu -\frac1{\, \kappa(x) \,} \, \Delta_{\tau\,} u  = 0 &\mbox{on $\Gamma_{\nu}$.}
\end{cases}
\end{equation*}
This is manifestly a special case of problem \eqref{inhomogeneous}, which is well-posed by virtue of Theorem \ref{Maintheorem}.
\paragraph{Example of an explicit solution.}
For $r(t)=(x(t),y(t))$ with 
\begin{equation*}
(x(t),y(t))
=
\frac{(t,1)}{\, \left(2-\rho(t)\right)^\frac23 \left(1+\rho(t)\right)^\frac13 \,}, \quad \rho(t) = \sqrt{1+t^2 \,}, \quad t\in (-1,1),
\end{equation*}
let us define $\Gamma_{\!\nu},\Gamma_{\!D}$ as in~\eqref{boundary}, so that the domain $\Omega$ is the open and bounded set with boundary $\partial \Omega = \Gamma_{\nu} \cup \Gamma_{\!D}$. By differentiation we find
$$
\begin{cases}
x'(t) = y(t) + t \, y'(t),
\\
\noalign{\medskip}
y'(t) = t \, (2-\rho(t))^{-\frac53} \, (1+\rho(t))^{-\frac43}
\end{cases}
$$%
and
$$
\begin{cases}
x''(t) = 2y'(t) + t \, y''(t)
\\
\noalign{\medskip}
y''(t) =(\rho(t))^{-1} \, (2-\rho(t))^{-\frac83} \, (1+\rho(t))^{-\frac73} \, (1 - \rho + 2\rho^3)
\end{cases}
$$%
In order to compute $\kappa(t)$ from~\eqref{kappa}, we prepare the equalities
\begin{align*}
x'(t) \, y''(t) - y'(t) \, x''(t)
&=
y(t) \, y''(t) - 2(y'(t))^2
\\
&=
(\rho(t))^{-1} \, (2-\rho(t))^{-\frac{10}3} \, (1+\rho(t))^{-\frac53}
\end{align*}
and
\begin{align*}
(x'(t))^2 + (y'(t))^2
&=
y^2(t) + 2t \, y(t) \, y'(t) + \rho^2(t) \, (y'(t))^2
\\
\noalign{\smallskip}
&=
2\rho(t) \, (2-\rho(t))^{-\frac{10}3} \, (1 + \rho(t))^{-\frac53}
\end{align*}
which imply
$$
\kappa(t)
=
\frac{\, (2 - \rho(t))^\frac53 \, (1 + \rho(t))^\frac56}
{2^\frac32 \, \rho^\frac52(t)}
> 0, \quad t \in (-1,1).
$$%
The tangent unit vector to $\Gamma_\nu$, whose expression is found in~\eqref{tangetUnitvector}, reads as
$$
\tau =
\frac{\, \big( y(t) + t \, y'(t), \, y'(t) \big) \,}
{\, (2\rho(t))^\frac12  \, (2 - \rho(t))^\frac53 \, (1 + \rho(t))^\frac56 \,}
=
\frac{(1+\rho(t), \, t)}
{\, \sqrt{2\rho(t) \, (1 + \rho(t)) \,} \,}
,
\quad t \in (-1,1).
$$%
Additionally, let us give on $\Omega$ the smooth function $u({\bf x})=u(x,y)=x^3-3xy^2$. We obviously have
\[
D^2u(x,y)=6
\begin{pmatrix}
x & -y \\
-y & -x 
\end{pmatrix}
\quad \forall \, {\bf x} \in \Omega,
\]
and therefore
$$
D^2u(x(t),y(t))=6 \, y(t)
\begin{pmatrix}
t & -1 \\
-1 & -t 
\end{pmatrix}
\!,
\quad t \in (-1,1),
$$%
 from the expression of $\tau$ it is entailed that (recall \eqref{u_tautau}) $u_{\tau\tau}=0$ for all $t$ in $(-1,1)$; henceforth,  we conclude that $u$ classically solves
\begin{equation*}
\begin{cases}
\Delta u = 0 &\mbox{in $\Omega$;}
\\
\noalign{\bigskip}
u(x,0) = x^3&\mbox{for $x \in (x(-1),x(1))$;}
\\
\noalign{\bigskip}
u(x(-1),y) = \displaystyle -\frac1{\, 2 \,}-\frac{1}{\, \sqrt{2 \,} \,}+\frac{3 y^2}{\, \left(2-\sqrt{2 \,}\right)^{2/3} \, \left(1+\sqrt{2}\right)^{1/3} \,}&\mbox{for $y \in [x(-1),y]$;}\\
u(x(1),y) = \displaystyle \frac1{\, 2 \,}+\frac{1}{\, \sqrt{2 \,} \,}-\frac{3 y^2}{\, \left(2-\sqrt{2}\right)^{2/3} \left(1+\sqrt{2 \,}\right)^{1/3} \,}&\mbox{for $y \in [x(1),y]$;}\\
u_{\tau\tau}=\displaystyle u_\nu -\frac1{\, \kappa(t) \,} \, \Delta_{\tau\,} u  = 0 &\mbox{on $\Gamma_{\nu}$.}
\end{cases}
\end{equation*}
\pdfbookmark{Acknowledgements}{Acknowledgements}
\subsubsection*{Acknowledgements} The authors are members of the Gruppo Nazionale per l'Analisi Matematica, la Probabilit\`a e le loro Applicazioni (GNAMPA) of the Istituto Na\-zio\-na\-le di Alta Matematica (INdAM) and are  partially supported by the research projects \textit{Integro-differential Equations and Non-Local Problems}, funded by Fondazione di Sardegna (2017). GV is partially supported by MIUR (Italian Ministry of Education, University and Research) Prin 2017 \textit{Nonlinear Differential Problems via Variational, Topological and Set-valued Methods} (Grant Number: 2017AYM8XW).

\bibliographystyle{abbrv}
\end{document}